\newtheorem{lemma}{Lemma}[section]
\newtheorem{assumption}[lemma]{Assumption}
\newtheorem{notation}[lemma]{Notation}
\newtheorem{corollary}[lemma]{Corollary}
\newtheorem{definition}[lemma]{Definition}
\newtheorem{theorem}[lemma]{Theorem}
\newtheorem{remark}[lemma]{Remark}
\numberwithin{equation}{section}
\numberwithin{lemma}{section}
 \def\Graph{\mathcal{G}}
 \def\mV{\mathsf{V}}
 \def\mFB{\mathsf{FB}}
 \def\mIS{\mathsf{IS}}
 \def\mE{\mathsf{E}}
 \def\mK{\mathsf{K}}
 \def\mv{\mathsf{v}}
 \def\mvdi{\mv^*}
 \def\mVdi{\mV^*}
 \def\me{\mathsf{e}}
 \def\mw{\mathsf{w}}
 \def\mG{\mathsf{G}}
\DeclareMathOperator{\diag}{diag}
\newcommand{\C}{\mathbb{C}}
\newcommand{\N}{\mathbb{N}}
\newcommand{\ud}{\,\mathrm{d}}
\newcommand{\e}{\mathrm{e}}
\def\:{\thinspace:\thinspace}
\title
{Distinguishing co-spectral quantum graphs by scattering}
\author[D.~Mugnolo]{Delio Mugnolo}
\author[V.~Pivovarchik]{Vyacheslav Pivovarchik}
\address{Delio Mugnolo, Lehrgebiet Analysis, Fakultät Mathematik und Informatik, Fern\-Universität in Hagen, D-58084 Hagen, Germany}
\email{delio.mugnolo@fernuni-hagen.de}
\address{Vyacheslav Pivovarchik, South Ukrainian National Pedagogical University, 65920, Odesa, Ukraine}
\email{vpivovarchik@gmail.com}
\subjclass[2010]{34B45 (05C50 35P15 81Q35)}
\keywords{Co-spectral, graph, tree, lead, spectrum, Sturm--Liouville equation, $S$-function, asymptotics}
\thanks{
This article is based upon work from COST Action 18232 MAT-DYN-NET, supported by COST (European Cooperation in Science and Technology), \url{www.cost.eu}.
DM was partially supported by the Deutsche Forschungsgemeinschaft (Grant 397230547).
}
\begin{document}
\begin{abstract}
We propose a simple method for resolution of co-spectrality of Schrödinger operators on metric graphs. Our approach consists of attaching a lead to them and comparing the $S$-functions of the corresponding scattering problems on these (non-compact) graphs. 

We show that in several cases -- including general graphs on at most 6 vertices, general trees on at most 9 vertices, and general fuzzy balls -- eigenvalues and scattering data are together sufficient to  distinguish co-spectral metric graphs.
\end{abstract}

\maketitle
	

\section{Introduction}
The classical inverse  (Schrödinger) problem of finding the potential on a half-axis using the scattering data has been in the spotlight of mathematicians since the 1950s. The $S$-function (traditionally called $S$-matrix) of the $S$-wave Schrödinger equation (Sturm--Liouville equation) alone cannot possibly determine the potential uniquely \cite {Bar}, but it does in combination with knowledge of the normal eigenvalues (if any) and the corresponding normalizing constants  \cite{Mar,Lev}. If, however, the potential is decreasing faster than any exponent at infinity, then the corresponding Jost function is an entire function (see, e.g., \cite[Theorem 7.2.2]{Nus}) and even knowledge of the $S$-function only is sufficient to find the potential \cite{Mar}. 

In the case of scattering on non-compact graphs, the situation is more complicated. 
Some results of recovering the potentials on the edges were obtained for particular classes of graphs in \cite{P1,LP,Ak}. Here we consider the problem of recovering the shape of the graph, rather than the potentials. It is well-known that even in the case of compact graph the shape of a metric graph $\Graph$ cannot generally be recovered from the spectrum of  (a suitable realization of) the free Laplacian $\Graph$, as co-spectral graphs may exist. Therefore, we consider a generalized problem that consists in recovering the shape of the graph from both the eigenvalues the Schrödinger operator \textit{and} some appropriate scattering data.

In discrete mathematics, the study of co-spectral (a.k.a.\ isospectral) graphs with respect to a specific choice of graph matrix is a classical topic in algebraic graph theory. The case of the adjacency matrix has been studied thoroughly since the 1950s. Other classical graph matrices include the Laplacian, normalized Laplacian, and signless Laplacian. Observe that the notions of co-spectrality with respect to all these matrices coincide if, in particular, the relevant graphs are regular (i.e., if all vertices have the same number of incident edges). In the 1980s, von Below developed a transference principle \cite{Bel85} that expresses the eigenvalues of the Laplacian on an equilateral metric graph $\Graph$ in terms of eigenvalues of the normalized Laplacian on its underlying combinatorial graph $\mG$ (as long as $\mG$ is simple, i.e., it contains neither loops nor multiple edges) along with the eigenvalues of the one-dimensional Laplacian with Neumann conditions. Combining this with the well-developed theory of co-spectral adjacency matrices, von Below could borrow the co-spectral pair of regular combinatorial graphs in~\cite[Figure~6.4]{CveDooSac79} and present a co-spectral pair of (equilateral) metric graphs~\cite[Figure~2]{Bel01}. 
 
More generally, von Below's above mentioned transference principle implies that if two simple graphs $\mG_1,\mG_2$ are co-spectral with respect to the normalized Laplacian (so, in particular they have the same number of vertices), and if additionally they have the same number of edges, then their equilateral metric counterparts $\Graph_1,\Graph_2$ are also co-spectral,
provided $\Graph_1,\Graph_2$ are both bipartite, or both non-bipartite. This immediately allows for lifting new co-spectrality results obtained in the combinatorial case to the metric case: we mention in particular several new constructions of classes of co-spectral graphs recently identified in \cite{ButGro11,LleFabPos22}.

While simplicity seems to be an important condition in order to apply von Below's transference principle, Roth  presented in \cite[Section V]{Rot84} a different (uncountable) class of non-simple co-spectral metric graphs: all of them are equilateral pumpkin chains, in the sense of \cite[Section~5]{BerKenKur19}, and in fact Roth's construction turns out to be a special case of \cite[Lemma~5.4]{BerKenKur19}.

In quantum graph theory it is usual to consider spectral problems associated with general Sturm--Liouville equations on equilateral metric graphs with standard conditions at the interior vertices and Neumann or Dirichlet conditions at the pendant vertices. Here, too, the problem of co-spectrality arises. Such investigations were started in \cite{KN,BKS} and continued in \cite{CP,CP2}.

In this paper we consider the problem of finding the shape of a compact graph with one \textit{lead} (i.e., a half-infinite edge) attached to it. 
 We always assume our metric graph to be simple and connected, consisting of an equilateral subgraph
and one additional lead. The potentials on the finite edges are real $L^2$-functions, whereas the potential on the lead is identically zero: this guarantees that the $S$-function is meromorphic. As already mentioned, we impose Neumann conditions at the pendant vertices and standard conditions at the interior vertices. In general, the knowledge of the $S$-matrix is not sufficient to determine the
topological structure of the graph uniquely: several negative results have been obtained in~\cite{KurSte02}, but
we show that 
knowledge of (asymptotics of) the $S$-matrix and the asymptotics of embedded eigenvalues uniquely determine the shape of the graph, if its compact part has at most 6 vertices: to this aim, we use the result of \cite{CP}, where the authors have proved that the spectrum of Schrödinger operator with standard vertex conditions on a simple, connected graph on at most 5 vertices
 uniquely determines the shape of the graph; as well as the analogous results of \cite{CP2,Pist} on graphs on 6 vertices.
(We recall that, in the case of graphs with non-commensurate edges, the spectrum always determines the shape of the graph \cite{GS}.)



 In Section 2 we formulate the spectral Sturm--Liouville problem on a simple, compact, connected equilateral graph endowed standard conditions at the interior vertices and Neumann conditions at the pendant vertices (Problem I) and another problem on the same graph with the same conditions at all vertices but one (the \textit{root}), where we impose the Dirichlet condition (Problem II). 

We can build upon known results from the literature -- specifically, \cite[Theorem 6.4.2]{MP2} -- that describe the relation between the spectrum of Problem I and the spectrum of the corresponding normalized Laplacian; as well as  the relation between the spectrum of Problem II and the spectrum of the modified normalized Laplacian corresponding to the so-called \textit{interior subgraph}, see Definition~\ref{def:matrices-d-a} below.


In Section 3 we consider a graph that consists of a lead attached to a compact, simple, connected equilateral subgraph. The potential on the lead is assumed to be zero identically. The corresponding scattering problem is a generalization of the so-called Regge problem \cite{Re}. We show that the essential (continuous) spectrum of the scattering problem (Problem III) covers the non-negative half-axis. There may occur eigenvalues embedded into continuous spectrum and normal eigenvalues (i.e.,  isolated eigenvalues of finite multiplicity) on the negative half-axis. In this section we describe connection between the $S$-function of Problem III and the characteristic functions of Problems I and II.

In Section 4 we show that the asymptotics of the $S$-function and the asymptotics of the eigenvalues uniquely determine the shape of the graph which consists of a lead attached to a simple connected equilateral subgraph if the number of vertices in the graph is at most 6.

In Section 5 we show that asymptotics of the $S$-function and of the eigenvalues uniquely determine the shape of a tree which consists of a lead attached to an equilateral subtree if the number of vertices in the tree does not exceed 9.

In Section 6 we consider a special class of \textit{fuzzy balls}: these represent an infinite class of co-spectral graphs and were first introduced by Butler and Grout in \cite[Section~4]{ButGro11}: we then show that attaching one leads either to the vertex of degrees $r$ or $s$ it is possible to distinguish non-isometric graphs.  Indeed, the class of co-spectral fuzzy balls identified in \cite[Example~1]{ButGro11} is much larger, and has been further extended in~\cite[Section~4]{LleFabPos22}: we leave for a later paper the extension of our results to this and further general configurations.

\section{Spectral problems on compact graphs}

Let $\Graph$ be a metric graph with vertex set $\mV$ and edge set $\mE$. We assume the edges and hence the vertices to be finitely many: we denote their number by $V$ and $E$, respectively  (to avoid trivialities, $E\ge 2$ and $V\ge 2$); and the generic vertex and edge by $\mv,\me$, respectively. Additionally, we assume $\Graph$ to be \textit{equilateral}, i.e., all edges to have same length $\ell\in (0,\infty)$: accordingly, there is a direct correspondence between $\Graph$ and the underlying combinatorial graph $\mG$. 
Given a vertex $\mv$, we denote by $\mE_{\mv}$ the set of edges incident in $\mv$ and by 
\begin{equation}\label{eq:degree-def}
\deg(\mv):=\#\mE_\mv
\end{equation}
 its \textit{degree}, i.e., the number of such edges. We denote by $\mV_{pen}$ the set of \textit{pendant vertices}, i.e.,
\[
\mV_{pen}:=\{\mv\in\mV:\deg(\mv)=1\}.
\]
(Let us remark that, unlike in many spectral geometric investigations we will not be free to insert or remove degree-2-vertices (``dummy vertices''), as $\Graph$ is assumed to be equilateral.)

Upon identifying each edge with (a copy of) the interval $[0,\ell]$, we will endow $\Graph$ with its canonical metric measure space structure (see~\cite{Mug19};
also, to avoid trivialities we assume $\Graph$ to be connected with respect to this metric.
Observe that the identification of each edge $\me$ with $[0,\ell]$ induces a natural orientation from $0$ (initial endpoint) to $\ell$ (terminal endpoint) on $\me$ and hence on $\Graph$ and we will denote by $\mE^-_\mv,\mE^+_\mv$ the sets of edges whose initial, respectively terminal endpoint is $\mv$.
\footnote{For our purposes  -- that is, the study of second order elliptic equations on each edge -- this orientation is essentially arbitrary and will not play any further role, once it is fixed.}

Finally, we always assume $\mG$ and hence $\Graph$ to be \textit{simple}, i.e., that for any two vertices $\mv,\mw\in\mV$ there is at most one edge simultaneously incident in both of them, and none if $\mv=\mw$.

Let us summarize our basic setting in the following.

\begin{assumption}\label{assum:graph}
$\Graph$ is a simple, finite, equilateral, connected metric graph.
\end{assumption}

Let us now set up our function analytical setting: we introduce the function spaces
\[
L^2(\Graph)\quad\hbox{and}\quad  C(\Graph)
\]
that  are canonically induced by the structure of $\Graph$ as metric measure space (we equivalently regard any $f\in L^2(\Graph)$ as a vector-valued function,
\[
f=(f_\me)_{\me\in\mE},\qquad \hbox{where}\qquad f_\me\in L^2(0,\ell)\hbox{ for any }\me\in\mE.)
\]
We consider functions $y$  on $\Graph$ of class $\bigoplus_{\me\in\mE}H^2(0,\ell)$: in our investigation, they are subject to the system of $E$ scalar Sturm--Liouville equations
\begin{equation}
-y_\me''(x)+q_\me(x)y_\me(x)=\lambda y_\me(x), \qquad \me\in\mE,\ x\in (0,\ell),
\label{2.1}
\end{equation}
 for some real-valued $q_{\me}\in L^2(0,\ell)$.

In order to turn~\eqref{2.1} into a self-adjoint problem, we have to impose appropriate transmission conditions on each $y\in \bigoplus_{\me\in\mE}H^2(0,\ell)$ at the vertices of $\Graph$: unless otherwise stated, we impose \textit{standard transmission conditions} consisting of 
\begin{itemize}
\item \textit{continuity conditions}: $y\in C(\Graph)$
\footnote{This, in particular, allows for the introduction of the point evaluation at the vertices of $\Graph$, hence the notation $y(\mv)$ is justified.}; and

\item \textit{Kirchhoff condition}: 
 \begin{equation}
\sum_{\me\in \mE^+_\mv} y'_{\me} (\ell)=\sum_{\me\in \mE^-_\mv} y'_{\me} (0),\qquad \mv\in\mV,
 \label{2.3}
 \end{equation}
 with respect to the (arbitrary but fixed) orientation of $\mG$ introduced above.
\end{itemize}

We will in the following apply some ideas from scattering theory to our context: to this purposes, for any given distinguished vertex $\mvdi$ we consider the usual Sturm--Liouville-type problem associated with different transmission conditions at $\mvdi$:

\begin{itemize}
\item Problem I (the \textit{Neumann problem}) consists of \eqref{2.1} endowed with standard conditions at all vertices of $\Graph$, including $\mvdi$;
\item Problem II (the \textit{Dirichlet problem}) consists of \eqref{2.1} endowed with standard conditions at all vertices of $\Graph$ apart from those in $\mVdi\subset \mV$, on which Dirichlet conditions
\[
y(\mv)=0,\qquad \mv\in\mVdi,
\]
are imposed.
\end{itemize}
(At the risk of being pedantic, let us stress that the Problem II is strictly more general, as it includes Problem I as a special case for $\mVdi=\emptyset$.)

We will in the following for all $\lambda\in \C$ denote by 
\begin{equation}\label{eq:sc}
(0,\ell)\ni x\mapsto s(\sqrt{\lambda},x )\in \C
\qquad\hbox{and}\qquad
(0,\ell)\ni x\mapsto c(\sqrt{\lambda},x)\in \C
\end{equation}
the functions whose component $s_\me,c_\me$, $\me\in\mE$, is on each edge the unique solution of the Sturm--Liouville equation \eqref{2.1} that
satisfies the conditions
\begin{equation}\label{eq:se}
s_\me(\sqrt{\lambda},0)=s_\me'(\sqrt{\lambda},0)-1=0
\end{equation}
and
\begin{equation}\label{eq:ce}
c_\me(\sqrt{\lambda},0)-1=c_\me'(\sqrt{\lambda},0)=0,
\end{equation}
respectively.

\begin{remark}
We stress that $s_\me,c_\me$ do generally depend on $\me$, since so do the potentials $q_\me$. However, when we consider the case of zero potentials -- i.e., $q_\me\equiv 0$ for all $\me\in\mE$ -- which in particular fulfill  Assumption 2.5, then we are going to drop the indices and simply write $s(\lambda,x)=\hat{s}(\lambda,x)=\frac{\sin\sqrt{\lambda} x}{\sqrt{\lambda}}$ and $c(\lambda,x)=\hat{c}(\lambda,x)=\cos(\sqrt{\lambda}x)$.
\end{remark}

In order to find a characteristic function of our Sturm--Liouville problems, following~\cite{Pokor} we look for coefficients $A_{\me_1},\ldots,A_{\me_{\me}},B_{\me_1},\ldots,B_{\me_{\me}}$ such that the the solution of \eqref{2.1} can be expressed in the form 
$$
Y=
\begin{pmatrix}
y_{\me_1}\\
\vdots\\
y_{\me_{\mE}}
\end{pmatrix}
 \qquad\hbox{for }\qquad  y_{\me}(x)=A_{\me}s_{\me}(\lambda,x)+B_{\me}c_{\me}(\lambda,x),\qquad \me\in\mE,\ x\in (0,\ell).
$$
Substituting this into the continuity conditions; as well as into Kirchhoff condition at each interior vertex and into the Neumann conditions at all pendant vertices for the Problem I (resp., Dirichlet conditions and $\mV^*$ and Neumann conditions at $\mV_{pen}\setminus\mV^*$  for the Problem II), 
we obtain a system of $2E$ linear algebraic equations with unknowns $A_{\me_1},\ldots, A_{\me_E},B_{\me_1},\ldots,B_{\me_E}$.
Denote the $2E\times 2E$ matrix of this system by $\|\Phi_N(\lambda)\|$ (resp., $\|\Phi_D(\lambda)\|$): we call it the \textit{characteristic matrix} of our problem. Observe that it involves the values $s_{\me}(\lambda,\ell)$, $s'_{\me}(\lambda,\ell)$, $c_{\me}(\lambda,\ell)$, $c'_{\me}(\lambda,\ell)$. 
Then the equation
$$
\det\|\Phi_N(\lambda)\|=0\qquad\hbox{(resp., $\det\|\Phi_N(\lambda)\|=0$)}
$$
fully determines the spectrum of~\eqref{2.1} under relevant vertex conditions.

\begin{definition}
We call
 \begin{equation}
 \label{2.9}
\lambda\mapsto \phi_N(\lambda):=\det(\Phi_N(\lambda))\qquad\hbox{(resp., $\lambda\mapsto\phi_D(\lambda):=\det\|\Phi_D(\lambda)\|$)}
 \end{equation}
 the \emph{characteristic
function} of Problem I (resp., of Problem II).
\end{definition}
This characteristic function is determined up to a constant multiple and its set of zeros coincides with the spectrum of Problem I, i.e.\ the Neumann problem (resp., of Problem II, i.e.\ the Dirichlet problem)

 
Let us now introduce another notion that will be important in the following.

\begin{definition}\label{def:matrices-d-a}
Let $\Graph$ satisfy Assumption~\ref{assum:graph} and let $\mVdi\subset\mV$. We call \emph{interior subgraph of $\Graph$} (with respect to $\mVdi$) the graph $\hat{\Graph}$ obtained by deleting from $\Graph$  the vertex set $\mVdi$ along with all edges of $\Graph$ that are incident in $\mVdi$; observe that the vertex set of $\hat{\Graph}$ is $\mV\setminus\mVdi$. 

Let $\hat{A}$ be the adjacency matrix of $\hat{\Graph}$ and let $\hat{D}_\Graph$ be the matrix
\footnote{We reiterate that, in accordance with \eqref{eq:degree-def}, $\deg(\mv)$ is the degree of the vertex $\mv$ in $\Graph$ -- not in $\hat{\Graph}$!}
\[
\hat{D}_\Graph :=\diag(\deg(\mv))_{\mv\notin\mVdi}.
\]

Then both $\hat{A}$ and $\hat{D}_\Graph$ are $\#(\mV\setminus\mVdi)\times \#(\mV\setminus\mVdi)$-matrices and we call
 $$
P_{\Graph,\hat{\Graph}}(z):=\det(z\hat{D}_\Graph -\hat{A}), \qquad z\in \C.
 $$
 the \emph{characteristic polynomial} of $\hat{\Graph}$.
 \end{definition}
 
\begin{remark}
If $\mVdi=\emptyset$ and thus $\hat{\Graph}=\Graph$, then the characteristic polynomial $P_{\Graph,\Graph}(\cdot)=\det(\cdot D_{\Graph}-A)$ is the determinant of the discrete Laplacian of the underlying combinatorial graph $\mG$.
\end{remark}

 Let us now formulate two sets of assumptions on the potentials that we will need to impose in our main results. 
\begin{assumption}\label{assum:potential-weak}
For all $\me\in\mE$ the potential $q_\me$ is a real-valued function of class $L^2(0,\ell)$.
\end{assumption}
\begin{assumption}\label{assum:potential}
The potential $q$ is edge-independent (i.e., there exists $q\in L^2(0,\ell)$ such that $q_\me\equiv q$) and symmetric with respect to the midpoint of any edge (i.e., $q(\ell-x)= q(x))$ for a.e.\ $x\in (0,\ell)$). 
\end{assumption} 


 The following was proved in \cite[Theorem 6.4.2]{MP2}.

\begin{theorem}\label{thm:main}
Under the  Assumptions \ref{assum:graph} and \ref{assum:potential} the spectrum of  Problem II
 coincides with the set of zeros of the characteristic function 
\begin{equation}
\label{2.11-dir}
\C\ni\lambda\mapsto\phi_{D}(\lambda):=s^{E-V+r}(\sqrt{\lambda},\ell)P_{\Graph,\hat{\Graph}}(c(\sqrt{\lambda},\ell))\in \C,
\end{equation}
where $s(\sqrt{\lambda},\cdot)$ and $c(\sqrt{\lambda},\cdot)$ are the solutions of the Sturm--Liouville equation for initial conditions $s(\sqrt{\lambda}, 0)=s'(\sqrt{\lambda}, 0)-1=0$ and $c(\sqrt{\lambda},0)-1=c'(\sqrt{\lambda},0)=0$ and $r=\# \mV^*$. 
\end{theorem}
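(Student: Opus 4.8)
The plan is to reduce the continuous Sturm--Liouville problem on $\Graph$ to a purely discrete eigenvalue problem on its vertex set; Assumption~\ref{assum:potential} is exactly what makes this reduction collapse onto the single spectral parameter $z=c(\sqrt{\lambda},\ell)$ entering $P_{\Graph,\hat{\Graph}}$. Since the setup already guarantees that the spectrum of Problem~II coincides with the zero set of $\det\Phi_D(\lambda)$, the whole content is the factorisation \eqref{2.11-dir} of this determinant.

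First I would record the two consequences of the symmetry of the potential. Because $q$ is even about the midpoint, the reflection $x\mapsto\ell-x$ carries any solution of \eqref{2.1} to another solution; comparing $s(\sqrt{\lambda},\ell-\,\cdot\,)$ with the basis $\{s,c\}$ through its Cauchy data at $0$ and at $\ell$ forces $s'(\sqrt{\lambda},\ell)=c(\sqrt{\lambda},\ell)$. Combined with the constancy of the Wronskian $s(\sqrt{\lambda},\cdot)c'(\sqrt{\lambda},\cdot)-s'(\sqrt{\lambda},\cdot)c(\sqrt{\lambda},\cdot)\equiv-1$ (whose value $-1$ is read off at $x=0$ from \eqref{eq:se}--\eqref{eq:ce}), this yields $c'(\sqrt{\lambda},\ell)=\bigl(c(\sqrt{\lambda},\ell)^2-1\bigr)/s(\sqrt{\lambda},\ell)$. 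These are the only two places where symmetry is used, and they are what let the endpoint data $s'(\sqrt{\lambda},\ell)$ and $c'(\sqrt{\lambda},\ell)$ be expressed through $c(\sqrt{\lambda},\ell)$ alone.

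Next I would carry out the vertex reduction. Writing $y_\me=A_\me s+B_\me c$ (edge-independent by Assumption~\ref{assum:potential}), one has $y_\me(0)=B_\me$ and $y'_\me(0)=A_\me$. I introduce the common vertex values $y(\mv)$ as auxiliary unknowns and impose continuity, which on an edge from $u$ to $w$ reads $B_\me=y(u)$ and $A_\me s(\sqrt{\lambda},\ell)+B_\me c(\sqrt{\lambda},\ell)=y(w)$, so that $A_\me=\bigl(y(w)-c\,y(u)\bigr)/s$ with $s=s(\sqrt{\lambda},\ell)$, $c=c(\sqrt{\lambda},\ell)$. Using the two identities above to simplify $y'_\me(\ell)=A_\me s'(\sqrt{\lambda},\ell)+B_\me c'(\sqrt{\lambda},\ell)$, the fluxes become $y'_\me(0)=\bigl(y(w)-c\,y(u)\bigr)/s$ and $y'_\me(\ell)=\bigl(c\,y(w)-y(u)\bigr)/s$. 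Inserting these into the Kirchhoff condition \eqref{2.3} at a vertex $\mv\notin\mVdi$ and clearing the factor $1/s$ collapses the flux balance to the discrete relation $c\,\deg(\mv)\,y(\mv)=\sum_{\mw\sim\mv}y(\mw)$; the neighbours lying in $\mVdi$ drop out of the sum by the Dirichlet condition, while the full degree $\deg(\mv)$ survives on the left. This is exactly the $\mv$-th row of $\bigl(c\hat{D}_\Graph-\hat{A}\bigr)\hat{Y}=0$, and it explains structurally why $\hat{D}_\Graph$ carries the degrees of $\Graph$ whereas $\hat{A}$ sees only the edges of $\hat{\Graph}$.

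It then remains to assemble these local identities into the global determinant and to pin down the power of $s$. I would organise the whole linear system in the $2E$ edge coefficients $\{A_\me,B_\me\}$ together with the $V$ vertex values $\{y(\mv)\}$, and eliminate the edge coefficients by a Schur complement. For each edge the two continuity rows form the block $\left(\begin{smallmatrix}0&1\\ s&c\end{smallmatrix}\right)$ of determinant $-s$, so this elimination contributes $(-s)^{E}$; the surviving $V\times V$ system in the $y(\mv)$ has the $r$ trivial rows $y(\mv)=0$ for $\mv\in\mVdi$, and for $\mv\notin\mVdi$ the row $\tfrac1s$ times that of $cD_\Graph-A$, so its determinant equals $s^{-(V-r)}$ times the principal minor of $cD_\Graph-A$ on $\mV\setminus\mVdi$, namely $s^{-(V-r)}\det\bigl(c\hat{D}_\Graph-\hat{A}\bigr)=s^{-(V-r)}P_{\Graph,\hat{\Graph}}(c)$. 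Multiplying the two contributions, the determinant of this augmented system --- itself a characteristic function of Problem~II, hence determined only up to a nonzero constant --- equals a nonzero constant times $s(\sqrt{\lambda},\ell)^{E-V+r}P_{\Graph,\hat{\Graph}}\bigl(c(\sqrt{\lambda},\ell)\bigr)$, which is \eqref{2.11-dir} (and, for $\mVdi=\emptyset$, recovers the discrete Laplacian determinant of the Remark above). The main obstacle is precisely this bookkeeping: to obtain the exponent $E-V+r$ one must balance the $s^{E}$ produced by the edge elimination against the factor $1/s$ carried by each of the $V-r$ surviving flux rows; and since the reduction is valid only where $s(\sqrt{\lambda},\ell)\neq0$, one must separately verify that the identity persists at the zeros of $s$, which follows because both sides of \eqref{2.11-dir} are entire in $\lambda$ and hence agree by analytic continuation, guaranteeing that no eigenvalues are lost or spuriously created there.
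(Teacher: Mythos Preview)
The paper does not prove this theorem at all: it simply cites \cite[Theorem~6.4.2]{MP2} and moves on. So there is no ``paper's own proof'' to compare against; what you have written is a complete self-contained argument, and as such it is more than the paper offers.

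Your argument is correct. The two identities $s'(\sqrt{\lambda},\ell)=c(\sqrt{\lambda},\ell)$ and $c'(\sqrt{\lambda},\ell)=(c(\sqrt{\lambda},\ell)^2-1)/s(\sqrt{\lambda},\ell)$ are indeed the precise consequences of the midpoint symmetry combined with the Wronskian normalisation, and they are exactly what collapses the endpoint flux data onto the single variable $c(\sqrt{\lambda},\ell)$. The vertex reduction you carry out is the standard von~Below--type discretisation, and your observation that the full degree $\deg(\mv)$ in $\Graph$ survives on the diagonal while only the $\hat{\Graph}$-adjacencies survive off-diagonally is the key structural point behind the definition of $P_{\Graph,\hat{\Graph}}$. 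The Schur-complement bookkeeping for the exponent $E-V+r$ is clean: each edge block contributes $-s$, each of the $V-r$ surviving Kirchhoff rows carries a $1/s$, and the $r$ Dirichlet rows contribute $1$; the overall sign $(-1)^E$ is absorbed into the ``up to a nonzero constant'' caveat that the paper itself attaches to characteristic functions. Your analytic-continuation remark to cover the discrete set where $s(\sqrt{\lambda},\ell)=0$ closes the only remaining gap.
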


\begin{notation}
If $\mVdi=\emptyset$, we introduce the characteristic function $\phi_N$ defined by
\begin{equation}
\label{2.11}
\C\ni\lambda\mapsto \phi_{N}(\lambda):=s^{E-V}(\sqrt{\lambda},\ell)P_{\Graph,\Graph}(c(\sqrt{\lambda},\ell))\in\C.
\end{equation}
\end{notation}



 

\section{Attaching leads to compact metric graphs}\label{sec:scattering}

In this section we consider a graph $\Graph_\infty$ obtained by attaching a lead $\me_0$ to $\mvdi$ which is the root of a compact simple connected equilateral graph $\Graph$ described in the previous section. We direct this edge $\me_0$ away from $\mvdi$. We assume that the potential $q_0$ on $\me_0$ is identically $0$. Thus we have the equations
\begin{equation}
\label{3.1}
-y_\me^{\prime\prime}(x)+q_\me(x)y_\me(x)=\lambda y_\me(x),\qquad  \me\in\mE ,\ x\in [0,\ell],
\end{equation}
on all finite edges along with the equation
\begin{equation}
\label{3.2}
-y_0^{\prime\prime}(x)=\lambda y_0(x),\qquad x\in [0,\ell],
\end{equation}
on the edge $\me_0$.
We are going to endow the Sturm--Liouville equation~\eqref{3.1}--\eqref{3.2} on $\Graph_\infty$ with standard conditions at all vertices.

%
%

For a closed linear operator $A$ on a Hilbert space, we let $D(A)$, $\rho(A)$
and $\sigma(A)$ denote its domain, resolvent set and spectrum. We
refer to \cite[Section I.2]{GK} for the definition of \textit{normal} (that is,
isolated eigenvalues of finite multiplicity) eigenvalues, and denote by $\sigma_{0}(A)$ the
set of normal eigenvalues of $A$ and by
$\sigma_{\mathrm{ess}}(A)=\sigma(A)\backslash\sigma_{0}(A)$ the essential
spectrum. 
At this
point we recall that the spectrum of any self-adjoint operator $A$
coincides with its approximate spectrum, see, e.g.,
\cite[page 118]{EE}, where the latter is defined as the set of
$\lambda\in\mathbb{C}$ such that there exists a sequence
$\left\{f_{n}\right\}_{n=1}^\infty$ in $D(A)$
such that $\| f_{n}\|\equiv 1$ and $(\lambda I-A)f_{n}\to 0$ as $n\to\infty$. If
the sequence $\left\{f_{n}\right\}_{n=1}^{\infty}$ is compact,
then $\lambda$ is either a normal eigenvalue, or an eigenvalue
that belongs to the essential spectrum (in the latter case, in
quantum mechanics, $\lambda$ is called a \textit{bound state embedded
into the continuous spectrum}).

On the Hilbert space 
\[
L^2(\Graph_\infty):=L^2(0,\infty)\oplus \bigoplus\limits_{j=1}^E L_{2}(0,\ell)
\]
of square-integrable vector-valued functions $y=(y_\me)_{j=0}^E$ we
introduce an operator $A$, related to the boundary value problem
\eqref{3.1}--\eqref{3.2}, that acts as
$$
A(y_\me)_{\me\in\mE}=(-y^{\prime\prime}_{j}+q_{j}y_{j})_{\me\in\mE},
$$ 
(we recall that $q_0(x)\equiv 0$) with domain
 \begin{equation} 
 \label{3.8}
D(A):=\left\{y=(y_\me)_{j=0}^E \in C(\Graph_\infty)\cap L^2(\Graph_\infty):y''\in L^2(\Graph_\infty)\hbox{ and }\sum_{\me\in\mE^-_\mv}y'(0)=\sum_{\me\in\mE^+_\mv}y'(\ell)\hbox{ for all }\mv\in\mV \right\}.
\end{equation}

We identify the spectrum of the operator $A$ with the spectrum of the boundary problem \eqref{3.1}--\eqref{3.2}. The followings seems to be folklore, but we could not find an appropriate reference in the literature  and therefore we give a proof.

\begin{theorem}
Under the Assumptions~\ref{assum:graph} and \ref{assum:potential-weak}, let $q_0=0$.
 Then the 
the operator $A$ on $L^2(\Graph_\infty)$ is self-adjoint and
bounded from below.
Furthermore, $\sigma_{\mathrm{ess}}(A)=[0,\infty)$.
\end{theorem}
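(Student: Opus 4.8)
The plan is to obtain self-adjointness and semiboundedness by perturbation theory, and to locate the essential spectrum by decoupling the lead $\me_0$ from the compact part $\Graph$ and invoking the stability of the essential spectrum under finite-rank perturbations of the resolvent.

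First I would settle self-adjointness. Let $A_0$ denote the operator acting as $y\mapsto -y''$ on the same domain $D(A)$, i.e.\ the free version of $A$ in which every potential (including $q_0$) is set to zero. That $A_0$ is self-adjoint and non-negative is the standard fact for the Laplacian with standard vertex conditions on a metric graph with finitely many edges and leads; it can be recovered from the closed non-negative quadratic form $y\mapsto\sum_{\me}\int|y_\me'|^2$ with form domain $H^1(\Graph_\infty)\cap C(\Graph_\infty)$ (see \cite{Mug19}). The perturbation $Q\colon(y_\me)\mapsto(q_\me y_\me)$ is symmetric, because each $q_\me$ is real. Since every finite edge has length $\ell<\infty$ and $H^2(0,\ell)$ embeds continuously into $C^1[0,\ell]$, one has $\|y_\me\|_\infty\le\varepsilon\|y_\me''\|_{L^2}+C_\varepsilon\|y_\me\|_{L^2}$ for every $\varepsilon>0$, whence $\|Qy\|\le\varepsilon\|A_0y\|+C\|y\|$ with $\varepsilon$ arbitrarily small, i.e.\ $Q$ is relatively bounded with respect to $A_0$ with relative bound $0$. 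By the Kato--Rellich theorem, $A=A_0+Q$ is self-adjoint on $D(A_0)=D(A)$ and bounded from below.

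Next I would prove the inclusion $[0,\infty)\subseteq\sigma_{\mathrm{ess}}(A)$ by constructing singular Weyl sequences living on the lead, where the potential vanishes. Fixing $\lambda=k^2\ge 0$, a bump $\varphi\in C_c^\infty(1,2)$ and setting $\varphi_n(x):=n^{-1/2}\varphi(x/n)$, I would take $f_n:=\varphi_n\,\e^{\mathrm{i}kx}$ on $\me_0$, extended by zero to $\Graph_\infty$. For large $n$ the support of $f_n$ is an interior subinterval of $\me_0$, so $f_n\in D(A)$ and all vertex conditions hold trivially; moreover $\|f_n\|_{L^2}=\|\varphi\|_{L^2}$ is constant, $(A-\lambda)f_n=-(\varphi_n''+2\mathrm{i}k\varphi_n')\,\e^{\mathrm{i}kx}\to 0$ in $L^2$, and $f_n\rightharpoonup 0$ weakly. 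A singular Weyl sequence has no norm-convergent subsequence, so $\lambda$ lies in $\sigma_{\mathrm{ess}}(A)$, and hence $[0,\infty)\subseteq\sigma_{\mathrm{ess}}(A)$.

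Finally I would prove the reverse inclusion, and thereby equality, by decoupling at the root $\mvdi$. Let $A_{\mathrm{dec}}:=A_\Graph\oplus L$, where $A_\Graph$ is the Schr\"odinger operator with the same potential $q$ on the compact graph $\Graph$ with standard conditions at all its vertices (that is, Problem~I), and $L=-\mathrm{d}^2/\mathrm{d}x^2$ on $L^2(0,\infty)$ with a Neumann condition at the attaching endpoint. Both $A$ and $A_{\mathrm{dec}}$ are self-adjoint extensions of the closed symmetric operator $S$ obtained from $A$ by additionally imposing $y=0$ at $\mvdi$ on every incident edge (including $\me_0$); this $S$ has equal finite deficiency indices (at most $2\deg(\mvdi)$). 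Consequently $(A-\mathrm{i})^{-1}-(A_{\mathrm{dec}}-\mathrm{i})^{-1}$ is of finite rank by Krein's resolvent formula, and Weyl's theorem yields $\sigma_{\mathrm{ess}}(A)=\sigma_{\mathrm{ess}}(A_{\mathrm{dec}})$. Since $A_\Graph$ is a semibounded Schr\"odinger operator on a compact graph it has compact resolvent, so $\sigma_{\mathrm{ess}}(A_\Graph)=\emptyset$, while $\sigma_{\mathrm{ess}}(L)=\sigma(L)=[0,\infty)$; therefore $\sigma_{\mathrm{ess}}(A)=[0,\infty)$.

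I expect the third step to be the main obstacle: one must identify the correct common symmetric restriction $S$, verify that severing the single vertex $\mvdi$ genuinely decouples the lead from the compact part while preserving all other vertex conditions, and confirm that $S$ has \emph{finite} deficiency indices so that Krein's formula delivers a finite-rank resolvent difference. By contrast, the Kato--Rellich argument and the explicit Weyl sequence are routine.
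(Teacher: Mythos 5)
Your overall strategy is correct and genuinely different from the paper's proof. The paper establishes symmetry by integration by parts, obtains self-adjointness from a description of the domain of $A^*$ (following Richtmyer), and then handles the spectrum by regarding $A$ as a finite-dimensional extension of the minimal operator $A_0$ with domain $H^2_0$ on \emph{every} edge -- i.e., it decouples all edges simultaneously -- invoking a theorem of Glazman to conclude that the spectrum below $0$ consists of at most finitely many normal eigenvalues. Your Kato--Rellich argument (with the infinitesimal relative bound coming from the one-dimensional Sobolev embedding) replaces the adjoint-domain computation and delivers semiboundedness at the same time; and your Weyl sequence on the lead explicitly proves $[0,\infty)\subseteq\sigma_{\mathrm{ess}}(A)$, an inclusion the paper's written proof does not address (it only controls the negative half-axis, deferring the rest to the similarity with the cited forked-waveguide result). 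In these respects your write-up is more self-contained than the paper's.

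There is, however, one concrete error, located precisely at the step you flagged as the crux: your symmetric operator $S$ is not a common restriction of $A$ and $A_{\mathrm{dec}}$. If $D(S)=\{y\in D(A): y_\me(\mvdi)=0 \hbox{ for all } \me \hbox{ incident in } \mvdi\}$, then indeed $S\subseteq A$, but $A_{\mathrm{dec}}$ does \emph{not} extend $S$: any $y\in D(S)$ still satisfies the \emph{coupled} Kirchhoff condition at $\mvdi$, which forces the sum of the compact-edge derivatives at $\mvdi$ to equal $-y_0'(0)$ (up to orientation signs), and this quantity need not vanish. Hence the restriction of such a $y$ to $\Graph$ generally violates the Kirchhoff condition that $A_\Graph$ requires at $\mvdi$, and $y_0$ violates the Neumann condition that $L$ requires at its endpoint. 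The repair is immediate: additionally impose $y_0'(0)=0$ in $D(S)$ (the compact-edge derivative sum then vanishes automatically, by Kirchhoff), or simply take the maximal common restriction $D(A)\cap D(A_{\mathrm{dec}})=\{y\in D(A): y_0'(0)=0\}$, which has codimension $1$ in $D(A)$. With either choice $S$ is closed and symmetric with equal finite deficiency indices (at most $2$, respectively $1$), Krein's formula and Weyl's theorem apply, and you obtain $\sigma_{\mathrm{ess}}(A)=\sigma_{\mathrm{ess}}(A_\Graph)\cup\sigma_{\mathrm{ess}}(L)=[0,\infty)$ as intended -- which, incidentally, makes your Weyl-sequence step logically redundant, though it is correct and instructive.
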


We refer to the spectrum of the operator $A$ as the \textit{spectrum of Problem III}.

\begin{proof}
 The proof is similar to the proof of \cite[Theorem 2.2]{LP}.
First, we claim that $A$ is symmetric. Indeed, for
$y=(y_\me)_{j=0}^E\in D(A)$ and $ z=(z_\me)_{j=0}^E\in D(A)$,
integrating by parts and using the boundary conditions described in the domain $D(A)$, we obtain:

\begin{equation}
\label{3.11}
\begin{split}
(Ay,z)&=\sum_{\me\in\mE}\int_0^\ell y_\me^{\prime\prime}\overline{z_\me}\ud x-
\int_0^{\infty}y_0^{\prime\prime}\overline{z_0}\ud x+
\sum_{\me\in\mE}\int_0^\ell q_\me y_\me\overline{z_\me}\ud x\\
&=
-\sum_{\me\in\mE}y_\me'(\ell)\overline{z_\me(\ell)}
+y_0'(0)\overline{z_0(0)}+
\sum_{\me\in\mE}\int_0^\ell y_\me'\overline{z_\me}'\ud x+
\int_0^{\infty}y_0'\overline{z_0}'\ud x+
\sum_{\me\in\mE}\int_0^\ell q_\me y_\me\overline{z_\me}\ud x.
\end{split}
\end{equation}

Since $y,z\in
D(A)$, $\overline{z}$ is continuous at all vertices and $y$ satisfies Kirchhoff conditions: therefore
\begin{equation}
(Ay,z)= \sum_{\me\in\mE}\int_0^\ell y_\me'\overline{z_\me}'\ud x+
\int_0^{\infty}y_0'\overline{z_0}'\ud x
+ \sum_{\me\in\mE}\int_0^\ell q_\me y_\me\overline{z_\me}\ud x=(y,Az), \label{3.12}
\end{equation}
thus proving the claim. Letting $z:=y$ in \eqref{3.11}, we obtain 
\begin{equation}
\label{3.13}
(Ay,y)=\sum_{\me\in\mE}\int_0^\ell |y_\me'|^2\ud x+
\int_0^{\infty}|y_0'|^2\ud x
+ \sum_{\me\in\mE}\int_0^\ell q_\me|y_\me|^2\ud x.
\end{equation}
Using the description of the domain of $A^*$, as in \cite[Section~7.5]{Rich}, it follows that $A$ is self-adjoint.
The operator $A$ is a self-adjoint extension of the operator $A_0$ defined by 
\[
A_0(y_\me)_{\me\in\mE}=(-y_\me^{\prime\prime}+q_\me y_\me)_{\me\in\mE}
\]
 with domain
\begin{equation}\label{3.14}
D(A_0):=H^2_0(0,\infty)\oplus \bigoplus_{j=1}^E H^2_0(0,\ell).
\end{equation}
The operator $A_0$ is the direct sum of symmetric, closed operators that are bounded from below, see, e.g., \cite[Theorem V.19.5]{Nai}. Therefore, $A_0$ is also symmetric, closed and bounded from below.
Furthermore, using \cite[Theorem 1 /31(3)/]{Gla}, we conclude that the part of the spectrum of $A$ located below $0$ consists of at most finitely many normal eigenvalues.
\end{proof}

Now, denote
\[
\hat{s}(\lambda, x):=\frac{\sin\sqrt{\lambda}x}{\sqrt{\lambda}}\quad\hbox{and}\quad 
\hat{c}(\lambda,x):=\cos\sqrt{\lambda}x,\qquad \lambda\in\C.
\]
Arguments similar to those used in proof of \cite[Theorem 2.1]{LawP} show that the 
restriction
 of the solution of problem \eqref{3.1}--\eqref{3.2} onto the edge $\me_0$ is
$$
y_0(\lambda,x)=\phi_N(\lambda)\hat{s}(\lambda,x)+\phi_D(\lambda)\hat{c}(\lambda,x)
$$
(or, correspondingly, 
\begin{equation}
\label{3.15}
y_0(\lambda,x)=\frac{1}{2i\sqrt{\lambda}}
\left(\e^{i\sqrt{\lambda}x}(\phi_N(\lambda)+i\sqrt{\lambda}\phi_D(\lambda))
-\e^{-i\sqrt{\lambda}x}(\phi_N(\lambda)-i\sqrt{\lambda}\phi_D(\lambda))\right)
\end{equation} 
if $\mvdi$ is a pendant vertex.)
By analogy with the classical $S$-function (see, e.g., \cite{Mar}) we introduce the meromorphic function
$$
S:\lambda\mapsto \frac{E(\sqrt{\lambda})}{E(-\sqrt{\lambda})},
$$
where
\begin{equation}
\label{3.16}
E(\lambda):=\phi_N(\lambda)+i\sqrt{\lambda}\phi_D(\lambda),\qquad \lambda\in\C,
\end{equation}
is the Jost function.

\begin{lemma}\label{lem:interl}
Under the Assumption~\ref{assum:graph} and \ref{assum:potential-weak}, let $q_0=0$, and let $\mV^*$ be a singleton. Then the Jost function $E(\sqrt{\lambda})$ is an entire function of $\sqrt{\lambda}$ with the zeros 
lying in the closed lower half-plane $\overline{\mathbb{C}^-}$ symmetrically with respect to the imaginary axis and on a finite interval of the negative imaginary half-axis.
\end{lemma}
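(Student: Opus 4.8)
The plan is to read off the four asserted properties of $E$ from the factorised form $E(k)=\phi_N(k^2)+ik\,\phi_D(k^2)$, where $k:=\sqrt\lambda$, combined with the self-adjointness of $A$ already established in the preceding Theorem. First I would record that on each edge the solutions $s_\me(\sqrt\lambda,\cdot),c_\me(\sqrt\lambda,\cdot)$ of \eqref{2.1}, and in particular their Cauchy data $s_\me(\sqrt\lambda,\ell),s_\me'(\sqrt\lambda,\ell),c_\me(\sqrt\lambda,\ell),c_\me'(\sqrt\lambda,\ell)$, depend on $\lambda$ as entire functions (the classical analytic dependence of Sturm--Liouville solutions on the spectral parameter for $L^2$ potentials). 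Since $\phi_N,\phi_D$ are polynomial combinations of these quantities -- equivalently, determinants of $\Phi_N,\Phi_D$ with entire entries -- they are entire in $\lambda$, hence entire and even in $k$; consequently $E(k)=\phi_N(k^2)+ik\,\phi_D(k^2)$ is entire in $k$, giving the first claim. Because the potentials are real (Assumption~\ref{assum:potential-weak}), $s_\me,c_\me$ have real Taylor coefficients in $\lambda$, so $\phi_N,\phi_D$ are real on the real axis; from this $\overline{E(k)}=\phi_N(\bar k^2)-i\bar k\,\phi_D(\bar k^2)=E(-\bar k)$. As $k\mapsto-\bar k$ is reflection across the imaginary axis, the zero set of $E$ is symmetric with respect to that axis, which is the second claim.

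The core of the argument is the location of the zeros, and here I would invoke Hermite--Biehler theory. Write $E=P+iQ$ with $P(k):=\phi_N(k^2)$ and $Q(k):=k\,\phi_D(k^2)$, both real entire, $P$ even and $Q$ odd. The real zeros of $P$ are the points $\pm\sqrt{\mu}$ with $\mu$ an eigenvalue of Problem~I, and those of $Q$ are $0$ together with $\pm\sqrt{\nu}$ with $\nu$ an eigenvalue of Problem~II; both spectra are real since Problems I and II are self-adjoint. The decisive structural input is interlacing: Problem~II is obtained from Problem~I by imposing the single additional Dirichlet condition $y(\mvdi)=0$, and it is precisely the hypothesis that $\mV^*$ is a singleton that makes this a one-dimensional restriction, so the eigenvalues of the two problems strictly interlace. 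For the non-negative part of the spectra this puts the real zeros of $P$ and $Q$ in interlacing position, and the Hermite--Biehler theorem then forces the corresponding zeros of $E$ into the closed lower half-plane $\overline{\mathbb{C}^-}$.

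It remains to handle the zeros attached to the negative part of the spectrum, which produce the zeros on the imaginary axis. Here I would argue directly: a zero $k_0$ of $E$ on the imaginary axis corresponds to $\lambda_0=k_0^2<0$, and one checks that the solution whose restriction to the lead is the decaying exponential is then a genuine $L^2(\Graph_\infty)$-eigenfunction of $A$, so these imaginary-axis zeros are controlled by the negative eigenvalues of $A$. By the preceding Theorem the part of $\sigma(A)$ below $0$ consists of at most finitely many normal eigenvalues; combined with the fact that, for $k=it$ with $|t|$ large, the growing hyperbolic behaviour of $\phi_N(-t^2),\phi_D(-t^2)$ renders $E(it)=\phi_N(-t^2)-t\,\phi_D(-t^2)$ eventually of one sign, this shows the imaginary-axis zeros are finite in number and confined to a bounded sub-interval of the negative imaginary half-axis, the last claim.

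I expect the main obstacle to be the verification of the interlacing together with the correct sign of the Wronskian $P'Q-PQ'$, since this is exactly what selects the lower rather than the upper half-plane. The clean way to secure it is to show that the Titchmarsh--Weyl function of $\Graph$ seen from $\mvdi$, essentially $\sqrt\lambda\,\phi_D(\lambda)/\phi_N(\lambda)$, is a Nevanlinna function, whose pole--zero interlacing is precisely the hypothesis required by Hermite--Biehler. A secondary technical point is the treatment of common zeros of $\phi_N$ and $\phi_D$ (which yield zeros of $E$ on the real axis, i.e.\ on $\partial\overline{\mathbb{C}^-}$, and must be shown not to leak into the open upper half-plane) and of the negative eigenvalues, for which the strict Hermite--Biehler hypotheses have to be relaxed to the generalized class that permits finitely many zeros on the imaginary axis.
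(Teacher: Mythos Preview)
Your approach is essentially the same as the paper's: the paper obtains the interlacing $\mu_1\le\nu_1\le\mu_2\le\nu_2\le\cdots$ of the Neumann/Dirichlet eigenvalues from \cite[Theorem~3.1.8]{BK}, deduces that $\phi_D/\phi_N$ is an essentially positive Nevanlinna function (the class $N^{ep}_+$ of \cite{PW,MP}), and then invokes \cite[Theorem~5.2.9]{MP}, which is precisely the generalized Hermite--Biehler statement you anticipate needing to accommodate the finitely many negative eigenvalues and possible common zeros. Your write-up supplies the details (entireness, the symmetry $\overline{E(k)}=E(-\bar k)$, and the identification of the negative-imaginary-axis zeros with the normal eigenvalues of $A$) that the paper leaves implicit in its citations.
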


\begin{proof}
The assumption $\#\mV^*=1$ means that one lead is  attached to a pendant vertex. Then
using \cite[Theorem 3.1.8]{BK} we conclude that the zeros $\{\mu_k\}_{k=1}^{\infty}$ of $\phi_N$ interlace with the zeros $\{\nu_k\}_{k=1}^{\infty}$ of $\phi_D$:
$$
\mu_1\leq\nu_1\leq\mu_2\leq\nu_2\leq \ldots\leq\mu_k\leq\nu_k\leq\ldots
$$

This means that the function $\frac{\phi_D}{\phi_N}$ is an essentially positive Nevanlinna function ($\in N^{ep}_+$, see \cite[page 456]{PW} or \cite[Definition 5.1.26]{MP}). Using \cite[Theorem 5.2.9]{MP} we obtain the statement. 
\end{proof}

\begin{remark}
A proof similar to that of Lemma~\ref{lem:interl} shows that the assertion carries over to the case of larger set $\mV^*$, too.
\end{remark}

 The eigenvalues embedded into the continuous spectrum are located on the half-axis $[0, \infty)$. They correspond to those $\lambda>0$ for which $\phi_N(\lambda)=\phi_D(\lambda)=0$. 
 The eigenfunctions corresponding to such eigenvalues are supported on the compact part of the graph. If $\phi_N(\lambda)=\phi_D(\lambda)=0$ for $\lambda<0$, then this $\lambda$ is a normal eigenvalue.

\begin{lemma}\label{lem:asympt}
Under the Assumption~\ref{assum:graph} and \ref{assum:potential-weak}, let $q_0=0$.
Then the following asymptotics hold:
\begin{equation}
\label{3.17}
\phi_N(\lambda)\stackrel{\lambda\to +\infty}{=}\hat{\phi}_N(\lambda)+o(1), \
\phi_D(\lambda)\stackrel{\lambda\to +\infty}{=}\hat{\phi}_D(\lambda)+o(1)
\end{equation}
where `hat' in $\hat{\phi}_N$ and $\hat{\phi}_D$ corresponds to the case of identically zero potentials on all edges.
\end{lemma}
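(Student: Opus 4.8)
The plan is to reduce the statement to the classical large-$\lambda$ asymptotics of the edge fundamental solutions and then to propagate these through the determinantal representation of the characteristic functions. Write $k=\sqrt\lambda$ and recall that $\phi_N=\det\Phi_N$ and $\phi_D=\det\Phi_D$ are, for a fixed layout of the vertex conditions, the \emph{same} determinants as $\hat\phi_N,\hat\phi_D$; the only difference is that the matrix entries are built from the potential-dependent data $s_\me(\sqrt\lambda,\ell),s_\me'(\sqrt\lambda,\ell),c_\me(\sqrt\lambda,\ell),c_\me'(\sqrt\lambda,\ell)$ rather than from their free counterparts. First I would record the standard asymptotics for solutions of \eqref{2.1} with $q_\me\in L^1(0,\ell)$ (a fortiori $L^2$), see e.g.\ \cite{Mar}: as $\lambda\to+\infty$,
\begin{equation*}
s_\me(\sqrt\lambda,\ell)=\hat s(\lambda,\ell)+O(\lambda^{-1}),\quad s_\me'(\sqrt\lambda,\ell)=\hat c(\lambda,\ell)+O(\lambda^{-1/2}),\quad c_\me(\sqrt\lambda,\ell)=\hat c(\lambda,\ell)+O(\lambda^{-1/2}),
\end{equation*}
while $c_\me'(\sqrt\lambda,\ell)=-\sqrt\lambda\sin\sqrt\lambda\ell+O(1)$; all estimates are uniform in $\me$.

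The key structural observation is that three of the four data converge, in absolute value, to the free ones, but $c_\me'(\sqrt\lambda,\ell)$ grows like $\sqrt\lambda$, so a naive ``continuity of the determinant'' argument would only control $\det\Phi_N-\det\hat\Phi_N$ up to $O(1)$. To remove this obstruction I would eliminate the growing entries using the Wronskian identity $s_\me(\ell)c_\me'(\ell)-c_\me(\ell)s_\me'(\ell)\equiv -1$. Solving the continuity conditions for the interior derivatives expresses every $y_\me'$ at a vertex solely through the vertex values $\{y(\mv)\}_{\mv\in\mV}$ and the bounded data $1/s_\me(\ell)$, $c_\me(\ell)/s_\me(\ell)$, $s_\me'(\ell)/s_\me(\ell)$ -- the term $c_\me'(\ell)$ disappears. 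This reduces $\phi_N$ to $\bigl(\prod_\me s_\me(\ell)\bigr)\det M(\lambda)$ with $M$ a $V\times V$ weighted Laplacian; after scaling each row by $\hat s(\lambda,\ell)$ and inserting the asymptotics above, $\hat s(\lambda,\ell)M(\lambda)\to -(\hat c(\lambda,\ell)D_\Graph-A)$ entrywise with an $O(\lambda^{-1/2})$ error. A first-order determinant-perturbation estimate then gives the leading term $\hat s^{E-V}(\lambda,\ell)P_{\Graph,\Graph}(\hat c(\lambda,\ell))$; since the identical reduction applied to the free problem reproduces $\hat\phi_N$ (cf.\ \eqref{2.11}), the leading terms coincide and the remainder is $\hat s^{E-V}(\lambda,\ell)\,O(\lambda^{-1/2})$, which is $o(1)$ in the regime where $\hat s^{E-V}(\lambda,\ell)$ stays bounded (in particular whenever $E\ge V$). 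This simultaneously re-derives, in the limit, the structure of Theorem~\ref{thm:main} and explains why the weaker Assumption~\ref{assum:potential-weak} suffices; the argument for $\phi_D$ is identical after passing to the interior subgraph $\hat\Graph$ and the polynomial $P_{\Graph,\hat\Graph}$, using \eqref{2.11-dir}.

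The hard part will be the uniformity of the $o(1)$ bound near the zeros of $\sin\sqrt\lambda\ell$, where the reduced form $\bigl(\prod_\me s_\me\bigr)\det M$ develops spurious poles even though $\phi_N,\hat\phi_N$ are entire; this is also where the tree case $E=V-1$ (for which $\hat\phi_N$ itself grows like $\sqrt\lambda$) is most delicate. I expect to handle this either by carrying the estimate out on the pole-free $2E\times 2E$ determinant directly -- bounding, via multilinearity, each contribution of a perturbed $c_\me'(\ell)$-entry by its cofactor, which the same Wronskian reduction shows to be $O(\lambda^{-1/2})$ -- or by exploiting that $\phi_N-\hat\phi_N$ is entire of exponential type and applying a Phragm\'en--Lindel\"of/maximum-modulus bound on small circles around the exceptional points. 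Once the growing entries are shown to enter only through such vanishing weights, the desired asymptotics \eqref{3.17} follow.
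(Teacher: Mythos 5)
Your route is genuinely different from the paper's, and in one respect more honest. The paper's entire proof consists of quoting Marchenko's asymptotics for $s_\me(\lambda,\ell)$ and $c_\me(\lambda,\ell)$ and ``substituting these expressions into \eqref{2.9}''; it never mentions the derivative entries $s'_\me(\lambda,\ell)$, $c'_\me(\lambda,\ell)$ of the characteristic matrix, and so never confronts the $\sqrt\lambda$-growth of $c'_\me(\lambda,\ell)$, which is precisely the obstruction you isolate. Your reduction is sound: one does have $\det\Phi_N(\lambda)=\pm\prod_\me s_\me(\lambda,\ell)\,\det M(\lambda)$ exactly, with $M(\lambda)$ the $V\times V$ vertex (Dirichlet-to-Neumann) matrix, and your entrywise perturbation argument, combined with a maximum-modulus step on disks of fixed radius in the $\sqrt\lambda$-plane centred at the zeros of $\sin\sqrt\lambda\,\ell$, does give a complete proof of \eqref{3.17} whenever $E\ge V$, under the weak Assumption~\ref{assum:potential-weak} only. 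That is more than the paper's two-line substitution actually establishes.

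The genuine gap is the case you defer, namely trees ($E=V-1$), and it cannot be closed, because there the $\phi_N$-half of \eqref{3.17} fails with the determinant normalization of \eqref{2.9}. Take the path on three vertices with both edges oriented toward the middle vertex: \eqref{2.9} gives $\phi_N(\lambda)=c_1(\lambda,\ell)c'_2(\lambda,\ell)+c_2(\lambda,\ell)c'_1(\lambda,\ell)$. Inserting the second-order asymptotics $c_j(\lambda,\ell)=\hat c+\tfrac{Q_j}{2}\hat s+o(\lambda^{-1/2})$ and $c'_j(\lambda,\ell)=-\lambda\hat s+\tfrac{Q_j}{2}\hat c+o(1)$, where $Q_j=\int_0^\ell q_j(t)\,\mathrm{d}t$ and $\hat s,\hat c$ are the free solutions, yields $\phi_N(\lambda)-\hat\phi_N(\lambda)=\tfrac{Q_1+Q_2}{2}\cos\bigl(2\sqrt\lambda\,\ell\bigr)+o(1)$: an oscillating, non-decaying error whenever $Q_1+Q_2\ne 0$. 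This also falsifies the cofactor bound in your first proposed repair: the cofactor of the $c'_1$-entry in this determinant is $\pm c_2(\lambda,\ell)$, which is $O(1)$, not $O(\lambda^{-1/2})$ (the bound you want is available exactly when $E\ge V$, since the relevant cofactor is of size $\lambda^{(V-1-E)/2}$); and no Phragm\'en--Lindel\"of argument can remove an error that genuinely does not vanish. Note that the $\phi_D$-half does survive for trees: with $r=1$ one has $E-V+r=0$, the prefactor $\hat s^{\,E-V+r}$ is bounded, and your scheme gives an $O(\lambda^{-1/2})$ error there. So your proof stands for $E\ge V$; for trees the lemma itself needs repair (weakening $o(1)$ to $O(1)$ for $\phi_N$, or renormalizing the characteristic functions), a defect that the paper's own proof, being blind to the derivative entries, never detects.
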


 We will in the following refer to $\hat{\phi}_N,\hat{\phi}_D$ as the \emph{leading term} of the characteristic function $\phi_N,\phi_D$, respectively.

\begin{proof}
It is known (see, e.g., \cite[Lemma 3.4.2]{Mar}) that 
$$
s_\me(\lambda,\ell)\stackrel{|\lambda|\to \infty}{=}\frac{\sin\sqrt{\lambda}\ell}{\sqrt{\lambda}} +O\left(\frac{\e^{|{\rm Im}\sqrt{\lambda}\ell|}}{|\lambda|}\right), \quad
c_\me(\lambda,\ell)\stackrel{|\lambda|\to \infty}{=}\cos\sqrt{\lambda}\ell +O\left(\frac{\e^{|{\rm Im}\sqrt{\lambda}\ell|}}{|\sqrt{\lambda}|}\right),
$$
Substituting these expressions into \eqref{2.9} we arrive at the statements of Lemma~\ref{lem:asympt}.
\end{proof} 

Substituting \eqref{3.17} into \eqref{3.16} we immediately obtain the following.

\begin{corollary}\label{cor:asympt}
Under the Assumption~\ref{assum:graph} and \ref{assum:potential-weak}, let $q_0=0$. Then the asymptotics
\begin{equation}
\label{3.18}
S({\lambda})\stackrel{{\lambda\to +\infty}}{=}\hat{S}(\sqrt{\lambda})(1+o(1)),
\end{equation}
and
\begin{equation}
\label{3.19}
E(\sqrt{\lambda})\stackrel{{\lambda\to +\infty}}{=}\hat{E}(\sqrt{\lambda})(1+o(1)).
\end{equation}
hold, where quantities with `hat' correspond to the case of identically zero potentials on all edges.
\end{corollary}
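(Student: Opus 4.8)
The plan is to substitute the additive asymptotics~\eqref{3.17} of Lemma~\ref{lem:asympt} directly into~\eqref{3.16} and into the definition of $S$. Writing $k:=\sqrt\lambda$, the Jost function~\eqref{3.16} reads $E=\phi_N+ik\,\phi_D$, its reflected value is $E(-\sqrt\lambda)=\phi_N-ik\,\phi_D$ (recall that $\phi_N,\phi_D$ depend on $\lambda=k^2$ only), while $\hat E=\hat\phi_N+ik\,\hat\phi_D$ and $\hat E(-\sqrt\lambda)=\hat\phi_N-ik\,\hat\phi_D$. I would first record
\begin{equation*}
E(\pm\sqrt\lambda)-\hat E(\pm\sqrt\lambda)=\bigl(\phi_N-\hat\phi_N\bigr)\pm ik\bigl(\phi_D-\hat\phi_D\bigr).
\end{equation*}
Since $S=E(\sqrt\lambda)/E(-\sqrt\lambda)$ and $\hat S=\hat E(\sqrt\lambda)/\hat E(-\sqrt\lambda)$, once the multiplicative estimates $E(\pm\sqrt\lambda)=\hat E(\pm\sqrt\lambda)(1+o(1))$ are in hand,~\eqref{3.19} is immediate and~\eqref{3.18} follows by taking the quotient, the two error factors in numerator and denominator combining into a single $1+o(1)$.

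The step that needs care is upgrading the \emph{additive} $o(1)$ of~\eqref{3.17} to a \emph{multiplicative} $1+o(1)$. From~\eqref{3.17} alone the previous display only yields $E(\pm\sqrt\lambda)-\hat E(\pm\sqrt\lambda)=o(1)\pm ik\,o(1)=o(k)$, which need not be negligible against $\hat E$. To close this gap I would return to the finer Marchenko remainders underlying Lemma~\ref{lem:asympt}: on the positive real axis one has $s_\me(\lambda,\ell)=\hat s(\lambda,\ell)+O(k^{-2})$ and $c_\me(\lambda,\ell)=\hat c(\lambda,\ell)+O(k^{-1})$, together with the companion estimates for $s_\me',c_\me'$. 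Expanding the determinants $\phi_N=\det\Phi_N(\lambda)$ and $\phi_D=\det\Phi_D(\lambda)$ in these entries shows that the correction to each determinant is smaller than its leading term $\hat\phi_N$, resp.\ $\hat\phi_D$, by a factor $O(k^{-1})$. Hence each summand of $E(\pm\sqrt\lambda)-\hat E(\pm\sqrt\lambda)$ is an $O(k^{-1})$-fraction of the corresponding summand of $\hat E(\pm\sqrt\lambda)$, giving $E(\pm\sqrt\lambda)-\hat E(\pm\sqrt\lambda)=O(k^{-1})\,\hat E(\pm\sqrt\lambda)$.

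To make this relative estimate transparent I would factor out the common prefactor. Since $\hat\phi_N=\hat s(\lambda,\ell)^{E-V}P_{\Graph,\Graph}(\hat c(\lambda,\ell))$ and $\hat\phi_D=\hat s(\lambda,\ell)^{E-V+r}P_{\Graph,\hat{\Graph}}(\hat c(\lambda,\ell))$ by~\eqref{2.11}--\eqref{2.11-dir}, with $\hat s(\lambda,\ell)=\sin(k\ell)/k$, pulling out $\hat s(\lambda,\ell)^{E-V}$ expresses $\hat E(\pm\sqrt\lambda)$ as this prefactor times a bounded trigonometric bracket in $\sin(k\ell),\cos(k\ell)$, and the same factorization applies to $E(\pm\sqrt\lambda)$ with that bracket perturbed by $O(k^{-1})$; the multiplicative asymptotics then read off at once. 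The main obstacle is the behaviour near the common zeros of $\hat\phi_N$ and $\hat\phi_D$, where $\hat E$ vanishes and the quotient $E/\hat E$ degenerates: there I would interpret~\eqref{3.19}--\eqref{3.18} as the assertion that $\hat E,\hat S$ are the \emph{leading terms} of $E,S$, i.e.\ that the identities hold in the sense of matching leading asymptotics (equivalently, uniformly on the complement of fixed neighbourhoods of the zeros of $\hat E$). Granting this reading, $S(\lambda)=E(\sqrt\lambda)/E(-\sqrt\lambda)=\hat S(\sqrt\lambda)(1+o(1))$ follows immediately.
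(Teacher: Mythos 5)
Your proposal is correct and follows the same route as the paper, whose entire proof is the one-line substitution of the asymptotics \eqref{3.17} into \eqref{3.16} and into the quotient defining $S$ -- precisely your opening paragraph. The extra care you invest (refining the additive $o(1)$ of Lemma~\ref{lem:asympt} to a relative error via the Marchenko remainders, and reading \eqref{3.18}--\eqref{3.19} as statements about leading terms, valid away from the zeros of $\hat{E}$) goes beyond what the paper records, and it addresses genuine subtleties -- the term $i\sqrt{\lambda}\, o(1)$ is a priori only $o(\sqrt{\lambda})$, and $\hat{E}$ does have real zeros -- that the paper's ``immediately'' passes over, while remaining consistent with how the corollary is actually used in Sections 4--6.
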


\section{Inverse problem for general small graph}

The function $S$ can be found from scattering experiments, then using \eqref{3.18} we find $\hat{S}$.
In this section we consider the problem of recovering the shape of a graph using the scattering data. By scattering data we mean the $S$-function along with the eigenvalues. In fact, in Theorem~\ref{thm:determ-6v} we show how to recover the shape of a graph as soon as we are given the leading term $\hat{E}$ of the asymptotics of the Jost function $E$.

Before proceeding with solving of the inverse problem where $\hat{E}(\sqrt{\lambda})$ is the given data, we make the following remark. In general, the Jost function $E$ is not uniquely determined by the scattering function $S$, as long as $E$ is allowed to have zeros on the real axis or pairs of pure imaginary zeros symmetric with respect to the origin. To illustrate this, let us suppose that $\sqrt{\lambda}_k$ is a real zero of $E(\sqrt{\lambda})$. Then, due to the symmetry (Lemma 3.2), $-\sqrt{\lambda_k}$ is also a zero of $E(\sqrt{\lambda})$, and $\pm\sqrt{\lambda}_k$ are zeros of $E(-\sqrt{\lambda})$, too. Cancellation of the correspondent factors in the fraction $
S(\lambda)=\frac{E(\sqrt{\lambda})}{E(-\sqrt{\lambda})}
$
shows that the scattering function $S(\sqrt{\lambda})$ does not change as long as we move zeros of $E(\sqrt{\lambda})$ along the real axis in symmetric fashion. Similarly we can achieve the same cancellation if we suppose that $E(\sqrt{\lambda})$ has two symmetrically located pure imaginary zeros $\sqrt{\lambda}_k=i|\sqrt{\lambda}_k|$ and $\sqrt{\lambda}_{-k}=-i|\sqrt{\lambda}_k|$ : indeed, in this case we can move $\sqrt{\lambda_k}$ and $\sqrt{\lambda_{-k}}$ along the imaginary axis preserving symmetry $|\sqrt{\lambda_k}|=|\sqrt{\lambda_{-k}}|$ and having $S(\sqrt{\lambda})$ unchanged. However, if we exclude these possibilities, that is, if we assume a priori that $E (\sqrt{\lambda})$ may have only one real zero which is simple and located at the origin and does not have any pairs of purely imaginary zeros that are symmetric about the origin, then the Jost function is uniquely determined by the scattering function. Indeed, the $S$-function is meromorphic due to $q_0(x)\equiv 0$, on the lead,
hence the zeros of $S$ are the zeros of $E$.

If there are eigenvalues then the corresponding factors in the numerator and denominator are cancelled. However, in this case knowing the $S$-function and the location of all eigenvalues we can multiply the numerator of $S(\sqrt{\lambda})$ by $\mathop{\prod}\limits_i(\sqrt{\lambda}-\sqrt{\alpha_i})$ where $\{\sqrt{\alpha_i}\}$ is the set (finite or infinite) of real and pure imaginary zeros of 
$E(\sqrt{\lambda})$ and obtain $E(\sqrt{\lambda})$. Using \eqref{3.15} we find $\hat{E}(\sqrt{\lambda})$ and
$$
\hat{\phi}_N(\lambda)=\frac{\hat{E}(\sqrt{\lambda})+\hat{E}(-\sqrt{\lambda})}{2},\qquad  \hat{\phi}	_D(\lambda)=\frac{\hat{E}(\sqrt{\lambda})-\hat{E}(-\sqrt{\lambda})}{2i\sqrt{\lambda}}.
$$
 
Does the Jost function $E$ uniquely determine the shape of the graph? Suppose there are two graphs with the same Jost function: according to \eqref{3.16}, the graphs must have the same 
 $\phi_N$ and the same $\phi_D$. 
It is known (see \cite{CP}) that there is no pairs of non-isometric graphs with the same (up to an arbitrary constant factor) $\hat{\phi}_N$ among the simple equilateral connected graphs with no more than 5 vertices.

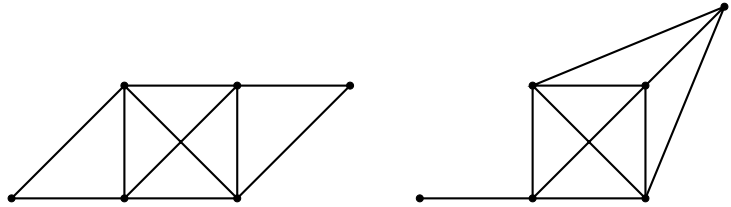
\begin{figure}[H]
\centering
\begin{tikzpicture}[scale=0.75]
\coordinate (a1) at (0,0);
\coordinate (a2) at (0,2);
\coordinate (a3) at (2,0);
\coordinate (a4) at (2,2);
\coordinate (a5) at (-2,0);
\coordinate (a6) at (4,2);
\draw[thick] (a1) -- (a2) -- (a3) -- (a4) -- (a1) -- (a5) -- (a2);
\draw[thick] (a1) -- (a3) -- (a6) -- (a2);
\draw[fill] (a1) circle (1.75pt);
\draw[fill] (a2) circle (1.75pt);
\draw[fill] (a3) circle (1.75pt);
\draw[fill] (a4) circle (1.75pt);
\draw[fill] (a5) circle (1.75pt);
\draw[fill] (a6) circle (1.75pt);
\end{tikzpicture}
\qquad
\begin{tikzpicture}[scale=0.75]
\coordinate (a1) at (0,0);
\coordinate (a2) at (0,2);
\coordinate (a3) at (2,0);
\coordinate (a4) at (2,2);
\coordinate (a5) at (-2,0);
\coordinate (a6) at (3.4,3.4);
\draw[thick] (a1) -- (a2) -- (a3) -- (a4) -- (a1) -- (a5);
\draw[thick] (a1) -- (a3) -- (a6) -- (a2) -- (a4) -- (a6);
\draw[fill] (a1) circle (1.75pt);
\draw[fill] (a2) circle (1.75pt);
\draw[fill] (a3) circle (1.75pt);
\draw[fill] (a4) circle (1.75pt);
\draw[fill] (a5) circle (1.75pt);
\draw[fill] (a6) circle (1.75pt);
\end{tikzpicture}\\[5pt]
\caption{Non-isomorphic graphs of 6 vertices with the same first and second terms of the asymptotics of the Sturm-Liouville problem eigenvalues.}\label{fig.2}
\end{figure}

%

Among the simple equilateral connected graphs of 6 vertices there is only one pair whose discrete Laplacians have same determinant, see \cite{Pist}. These graphs are shown in Figure~\ref{fig.2}; observe that they are both fuzzy balls in the sense of \cite{ButGro11}, see also Section~\ref{sec:exa-butgrout}. The determinant of the discrete Laplacian of these graphs is 
$$
P_{\Graph,\Graph}(z)=(z-1)(4z+1)^3(4z^2+z-1),
$$
see \cite{CP2}. Since identically zero potentials on the edges satisfy the Assumption~\ref{assum:potential}, then by Theorem~\ref{thm:main} we obtain  the expression
\begin{equation}
\label{4.1}
\hat{\phi}_N(\lambda)=\left(\hat{s}(\sqrt{\lambda},\ell)\right)^4(\hat{c}(\sqrt{\lambda},\ell)-1)(4\hat{c}(\sqrt{\lambda},\ell)+1)^3(4\hat{c}^2(\sqrt{\lambda},\ell)+\hat{c}(\sqrt{\lambda},\ell)-1).
\end{equation}
for the characteristic function of Problem I for the free Laplacian.
We need to compare also the corresponding functions $\phi_D$, but the form of this function depends on the vertex to which a lead is attached. All non-isometric graphs obtained by attaching a lead to  the left graph in Figure~\ref{fig.2} are shown in Figure~\ref{fig.3}.

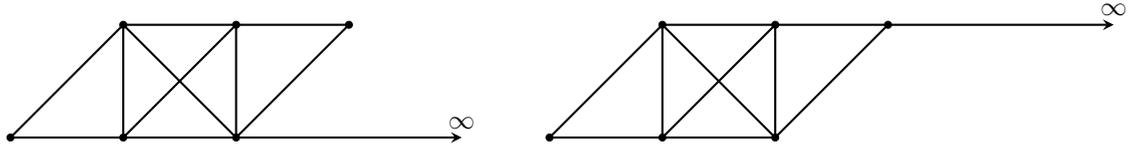
\begin{figure}[H]
\centering
\begin{tikzpicture}[scale=0.75]
\coordinate (a1) at (0,0);
\coordinate (a2) at (0,2);
\coordinate (a3) at (2,0);
\coordinate (a4) at (2,2);
\coordinate (a5) at (-2,0);
\coordinate (a6) at (4,2);
\coordinate (infty) at (6,0);
\draw[thick] (a1) -- (a2) -- (a3) -- (a4) -- (a1) -- (a5) -- (a2);
\draw[thick] (a1) -- (a3) -- (a6) -- (a2);
\draw[thick,-stealth] (a3) -- (infty);
\draw[fill] (a1) circle (1.75pt);
\draw[fill] (a2) circle (1.75pt);
\draw[fill] (a3) circle (1.75pt);
\draw[fill] (a4) circle (1.75pt);
\draw[fill] (a5) circle (1.75pt);
\draw[fill] (a6) circle (1.75pt);
\node at (infty) [anchor=south] {$\infty$};
\end{tikzpicture}
\qquad
\begin{tikzpicture}[scale=0.75]
\coordinate (a1) at (0,0);
\coordinate (a2) at (0,2);
\coordinate (a3) at (2,0);
\coordinate (a4) at (2,2);
\coordinate (a5) at (-2,0);
\coordinate (a6) at (4,2);
\coordinate (infty) at (8,2);
\draw[thick] (a1) -- (a2) -- (a3) -- (a4) -- (a1) -- (a5) -- (a2);
\draw[thick] (a1) -- (a3) -- (a6) -- (a2);
\draw[thick,-stealth] (a6) -- (infty);
\draw[fill] (a1) circle (1.75pt);
\draw[fill] (a2) circle (1.75pt);
\draw[fill] (a3) circle (1.75pt);
\draw[fill] (a4) circle (1.75pt);
\draw[fill] (a5) circle (1.75pt);
\draw[fill] (a6) circle (1.75pt);
\node at (infty) [anchor=south] {$\infty$};
\end{tikzpicture}
\\[5pt]
\caption{All non-isometric graphs obtained by attaching a lead to the left graph in Figure~\ref{fig.2}.}\label{fig.3}
\end{figure}

 Likewise, all non-isometric graphs obtained by attaching a lead to different vertices of the right graph in Figure~\ref{fig.2} are shown in Figure~\ref{fig.4}.
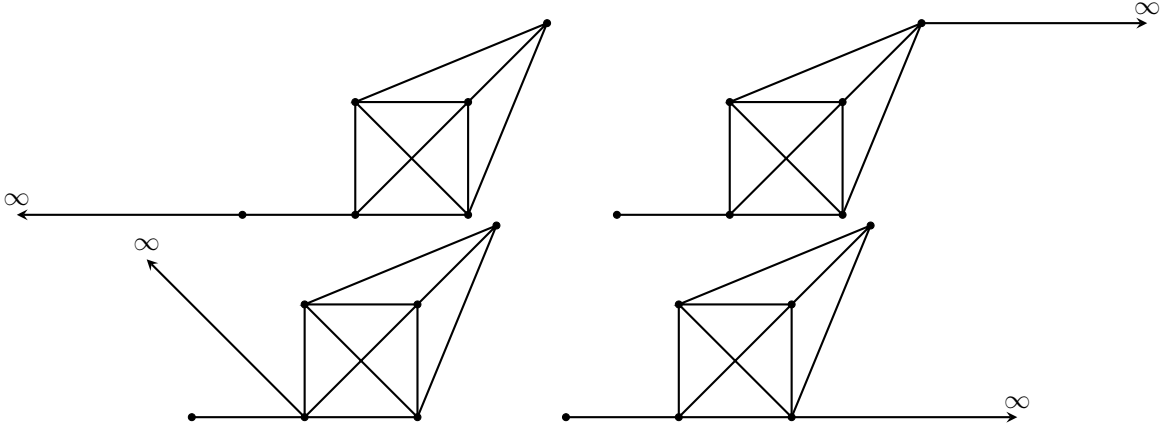
\begin{figure}[H]
\centering
\begin{tikzpicture}[scale=0.75]
\coordinate (a1) at (0,0);
\coordinate (a2) at (0,2);
\coordinate (a3) at (2,0);
\coordinate (a4) at (2,2);
\coordinate (a5) at (-2,0);
\coordinate (a6) at (3.4,3.4);
\coordinate (infty) at (-6,0);
\draw[thick] (a1) -- (a2) -- (a3) -- (a4) -- (a1) -- (a5);
\draw[thick] (a1) -- (a3) -- (a6) -- (a2) -- (a4) -- (a6);
\draw[thick,-stealth] (a5) -- (infty);
\draw[fill] (a1) circle (1.75pt);
\draw[fill] (a2) circle (1.75pt);
\draw[fill] (a3) circle (1.75pt);
\draw[fill] (a4) circle (1.75pt);
\draw[fill] (a5) circle (1.75pt);
\draw[fill] (a6) circle (1.75pt);
\node at (infty) [anchor=south] {$\infty$};
\end{tikzpicture}
\qquad
\begin{tikzpicture}[scale=0.75]
\coordinate (a1) at (0,0);
\coordinate (a2) at (0,2);
\coordinate (a3) at (2,0);
\coordinate (a4) at (2,2);
\coordinate (a5) at (-2,0);
\coordinate (a6) at (3.4,3.4);
\coordinate (infty) at (7.4,3.4);
\draw[thick] (a1) -- (a2) -- (a3) -- (a4) -- (a1) -- (a5);
\draw[thick] (a1) -- (a3) -- (a6) -- (a2) -- (a4) -- (a6);
\draw[thick,-stealth] (a6) -- (infty);
\draw[fill] (a1) circle (1.75pt);
\draw[fill] (a2) circle (1.75pt);
\draw[fill] (a3) circle (1.75pt);
\draw[fill] (a4) circle (1.75pt);
\draw[fill] (a5) circle (1.75pt);
\draw[fill] (a6) circle (1.75pt);
\node at (infty) [anchor=south] {$\infty$};
\end{tikzpicture}
\qquad
\begin{tikzpicture}[scale=0.75]
\coordinate (a1) at (0,0);
\coordinate (a2) at (0,2);
\coordinate (a3) at (2,0);
\coordinate (a4) at (2,2);
\coordinate (a5) at (-2,0);
\coordinate (a6) at (3.4,3.4);
\coordinate (infty) at (-2.8,2.8);
\draw[thick] (a1) -- (a2) -- (a3) -- (a4) -- (a1) -- (a5);
\draw[thick] (a1) -- (a3) -- (a6) -- (a2) -- (a4) -- (a6);
\draw[thick,-stealth] (a1) -- (infty);
\draw[fill] (a1) circle (1.75pt);
\draw[fill] (a2) circle (1.75pt);
\draw[fill] (a3) circle (1.75pt);
\draw[fill] (a4) circle (1.75pt);
\draw[fill] (a5) circle (1.75pt);
\draw[fill] (a6) circle (1.75pt);
\node at (infty) [anchor=south] {$\infty$};
\end{tikzpicture}
\qquad
\begin{tikzpicture}[scale=0.75]
\coordinate (a1) at (0,0);
\coordinate (a2) at (0,2);
\coordinate (a3) at (2,0);
\coordinate (a4) at (2,2);
\coordinate (a5) at (-2,0);
\coordinate (a6) at (3.4,3.4);
\coordinate (infty) at (6,0);
\draw[thick] (a1) -- (a2) -- (a3) -- (a4) -- (a1) -- (a5);
\draw[thick] (a1) -- (a3) -- (a6) -- (a2) -- (a4) -- (a6);
\draw[thick,-stealth] (a3) -- (infty);
\draw[fill] (a1) circle (1.75pt);
\draw[fill] (a2) circle (1.75pt);
\draw[fill] (a3) circle (1.75pt);
\draw[fill] (a4) circle (1.75pt);
\draw[fill] (a5) circle (1.75pt);
\draw[fill] (a6) circle (1.75pt);
\node at (infty) [anchor=south] {$\infty$};
\end{tikzpicture}
\\[5pt]
\caption{All non-isometric graphs obtained by attaching a lead to the right graph in Figure~\ref{fig.2}.}\label{fig.4}
\end{figure}

\begin{theorem}\label{thm:determ-6v}
Let $\Graph$ be a quantum graph on no more than 6 vertices satisfying the Assumption~\ref{assum:graph} and \ref{assum:potential-weak}.
Then the $S$-function and the eigenvalues (normal and embedded) of the Schrödinger equation on $\Graph_\infty$ uniquely determine the shape of $\Graph$.
\end{theorem}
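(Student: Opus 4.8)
The plan is to peel the scattering data down to a pair of entire functions that are insensitive to the potentials, translate these into the characteristic polynomials of two discrete (modified) Laplacians, and then settle the single genuinely ambiguous configuration by a finite determinant computation.

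First I would reconstruct the leading term $\hat{E}$ of the Jost function. By the discussion preceding the theorem, the $S$-function determines $E(\sqrt{\lambda})$ up to the factors that were cancelled in $S(\lambda)=E(\sqrt{\lambda})/E(-\sqrt{\lambda})$, namely those attached to the real and purely imaginary zeros of $E$; prescribing the locations of all eigenvalues, normal and embedded, restores exactly these factors and hence fixes $E$. Corollary~\ref{cor:asympt}, specifically \eqref{3.19}, then extracts $\hat{E}$ from the large-$\lambda$ asymptotics, and the crucial point is that this leading term is independent of the merely $L^2$ potentials permitted by Assumption~\ref{assum:potential-weak}. Decomposing $\hat{E}(\sqrt{\lambda})=\hat{\phi}_N(\lambda)+i\sqrt{\lambda}\,\hat{\phi}_D(\lambda)$ into its parts even and odd in $\sqrt{\lambda}$ recovers both $\hat{\phi}_N$ and $\hat{\phi}_D$. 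It therefore suffices to show that the pair $(\hat{\phi}_N,\hat{\phi}_D)$ determines the shape of $\Graph$.

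Next I would convert these two functions into discrete invariants. Identically zero potentials satisfy Assumption~\ref{assum:potential}, so Theorem~\ref{thm:main} and the Notation formula \eqref{2.11} apply and give
\[
\hat{\phi}_N(\lambda)=\hat{s}^{\,E-V}(\sqrt{\lambda},\ell)\,P_{\Graph,\Graph}(\hat{c}(\sqrt{\lambda},\ell)),
\qquad
\hat{\phi}_D(\lambda)=\hat{s}^{\,E-V+1}(\sqrt{\lambda},\ell)\,P_{\Graph,\hat{\Graph}}(\hat{c}(\sqrt{\lambda},\ell)),
\]
the latter with $r=\#\mVdi=1$ and $\hat{\Graph}$ the interior subgraph obtained from $\Graph$ by deleting the attachment vertex $\mvdi$ together with its incident edges (Definition~\ref{def:matrices-d-a}). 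Since $\hat{s},\hat{c}$ are universal and known, the analytic structure of $\hat{\phi}_N$ recovers the numbers $E,V$ and the characteristic polynomial $P_{\Graph,\Graph}$ of the discrete Laplacian of $\mG$, while $\hat{\phi}_D$ recovers $P_{\Graph,\hat{\Graph}}$. If $V\le 5$, the result of \cite{CP} already shows that $\hat{\phi}_N$, up to a constant factor, determines $\Graph$ among all simple connected equilateral graphs, and we are done. If $V=6$, then by \cite{Pist,CP2} the polynomial $P_{\Graph,\Graph}$ — and hence $\hat{\phi}_N$ — fails to determine the shape for exactly one pair of non-isometric graphs, the two fuzzy balls $\Graph^{(1)},\Graph^{(2)}$ of Figure~\ref{fig.2} with common discrete Laplacian determinant \eqref{4.1}; for every other graph on $6$ vertices $\hat{\phi}_N$ already suffices. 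Thus the theorem reduces to telling $\Graph^{(1)}$ and $\Graph^{(2)}$ apart using $\hat{\phi}_D$.

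To do so I would enumerate, up to isomorphism, the inequivalent vertices to which the lead can be attached — the two configurations of Figure~\ref{fig.3} for $\Graph^{(1)}$ and the four of Figure~\ref{fig.4} for $\Graph^{(2)}$ — and for each of these six decorated graphs compute the $5\times 5$ determinant $P_{\Graph,\hat{\Graph}}(z)=\det(z\hat{D}_\Graph-\hat{A})$, with $\hat{D}_\Graph$ carrying the degrees of the vertices in $\Graph$ rather than in $\hat{\Graph}$. The goal is to verify that the two polynomials coming from $\Graph^{(1)}$ are disjoint from the four coming from $\Graph^{(2)}$; in fact I expect all six to be pairwise distinct, which additionally pins down the attachment vertex. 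Once this is checked, membership of the observed $P_{\Graph,\hat{\Graph}}$ in the first family or the second tells us whether $\Graph=\Graph^{(1)}$ or $\Graph=\Graph^{(2)}$, completing the proof. The hard part is precisely this comparison: since $\Graph^{(1)}$ and $\Graph^{(2)}$ agree on every invariant visible in $\hat{\phi}_N$, the separation rests entirely on $P_{\Graph,\hat{\Graph}}$, and there is no structural reason forcing vertex deletion to break the co-spectrality rather than to reproduce it elsewhere — the delicate point is to exclude that some vertex of $\Graph^{(1)}$ and some vertex of $\Graph^{(2)}$ yield modified Laplacians with identical determinant. I therefore expect to evaluate all six determinants explicitly and check that the resulting degree-$5$ polynomials differ, taking care that the prefactors $\hat{s}^{\,E-V}$ and $\hat{s}^{\,E-V+1}$ do not mask a difference through cancellation of common zeros, and confirming that $\mvdi$ is pendant in $\Graph_\infty$ so that Lemma~\ref{lem:interl} and \eqref{3.15} are in force.
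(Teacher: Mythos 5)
Your proposal follows essentially the same route as the paper's proof: reconstruct $\hat{E}$ from the $S$-function and the eigenvalues, split it into $\hat{\phi}_N$ and $\hat{\phi}_D$, invoke \cite{CP,CP2,Pist} to reduce everything to the single co-spectral pair of Figure~\ref{fig.2}, and then distinguish that pair by comparing the Dirichlet-type characteristic functions over all inequivalent lead attachments (two for one graph, four for the other). The paper carries out exactly the computation you defer -- it lists the six functions $\hat{\phi}_D$ explicitly (equivalent to your six $5\times 5$ determinants $P_{\Graph,\hat{\Graph}}$ up to the universal factor $\hat{s}^{\,E-V+1}$) and checks that no two are proportional, confirming your expectation.
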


\begin{proof}
Due to Lemma~\ref{lem:asympt} and Corollary~\ref{cor:asympt} knowing $S(\sqrt{\lambda})$ we can find
\begin{equation}
\label{4.2}
\hat{E}(\sqrt{\lambda})=\hat{\phi}_N(\lambda)+i\sqrt{\lambda}\hat{\phi}_D(\lambda).
\end{equation}
The  leading terms  $\hat{\phi}_N$ -- which, we recall, are the characteristic functions of Problem I for the free Laplacian -- are different for all the compact equilateral simple connected graphs with $V\leq 5$ vertices are different (see the formulae for $\phi_i$, $i=1,2,\ldots,30$ in  \cite{CP}). If we admit the number of vertices to be 6, then all the functions $\hat{\phi}_N$ are different except for the function given by the equation \eqref{4.1},
which corresponds to both graphs of Figure~\ref{fig.2} (see \cite{CP2}). If the leading terms $\hat{\phi}_N$ are different, then also the leading terms $\hat{E}$ of the Jost functions are different, $\hat{S}$ are different and the asymptotics of the $S$-function uniquely determine the shape of the graph. 

Both ways of attaching a lead to the left graph in Figure~\ref{fig.2} are shown at Figure~\ref{fig.3}, and using Theorem~\ref{thm:main}
we obtain the functions $\hat{\phi}_D$ corresponding to both graphs:
\begin{equation}
\begin{aligned}
\hat{\phi}_D(\lambda)&=(-512\cos^5\sqrt{\lambda}\ell+320\cos^3\sqrt{\lambda}\ell+96\cos^2\sqrt{\lambda}\ell-2\cos\sqrt{\lambda}\ell-2)\left(\frac{\sin\sqrt{\lambda}\ell}{\sqrt{\lambda}}\right)^5\\
\hat{\phi}_D(\lambda)&=(-256\cos^5\sqrt{\lambda}\ell+144\cos^3\sqrt{\lambda}\ell+24\cos^2\sqrt{\lambda}\ell-10\cos\sqrt{\lambda}\ell-2)\left(\frac{\sin\sqrt{\lambda}\ell}{\sqrt{\lambda}}\right)^5.
\end{aligned}
\end{equation}

Likewise, there are four ways of attaching a lead to the right graph in Figure~\ref{fig.2}, see Figure~\ref{fig.4}. The functions $\hat{\phi}_D$ corresponding to the graphs of Figure~\ref{fig.4} are given by the formulae:

\begin{equation}
\begin{aligned}
\hat{\phi}_D(\lambda)&=(-768\cos^5\sqrt{\lambda}\ell+480\cos^3\sqrt{\lambda}\ell+192\cos^2\sqrt{\lambda}\ell+21\cos\sqrt{\lambda}\ell)\left(\frac{\sin\sqrt{\lambda}\ell}{\sqrt{\lambda}}\right)^5\\
\hat{\phi}_D(\lambda)&=(-256\cos^5\sqrt{\lambda}\ell+160\cos^3\sqrt{\lambda}\ell+32\cos^2\sqrt{\lambda}\ell-9cos\sqrt{\lambda}\ell-2)\left(\frac{\sin\sqrt{\lambda}\ell}{\sqrt{\lambda}}\right)^5\\
\hat{\phi}_D(\lambda)&=(-192\cos^5\sqrt{\lambda}\ell+116\cos^3\sqrt{\lambda}\ell+14\cos^2\sqrt{\lambda}\ell-11\cos\sqrt{\lambda}\ell-2)\left(\frac{\sin\sqrt{\lambda}\ell}{\sqrt{\lambda}}\right)^5\\
\hat{\phi}_D(\lambda)&=(-192\cos^5\sqrt{\lambda}\ell-84\cos^3\sqrt{\lambda}\ell+30\cos^2\sqrt{\lambda}\ell+3cos\sqrt{\lambda}\ell)\left(\frac{\sin\sqrt{\lambda}\ell}{\sqrt{\lambda}}\right)^5.
\end{aligned}
\end{equation}
It is clear that among the functions $\hat{\phi}_D$ corresponding to the graphs of Figure~\ref{fig.3} and Figure~\ref{fig.4} there are no coinciding pairs: hence, the  leading  terms of the Jost functions $\hat{E}$ corresponding to all the graphs in Figure~\ref{fig.3} and~\ref{fig.4} are all different (neither of them can be expressed as a multiple of another one). Therefore, the $S$-matrix uniquely determines the shapes of these graphs. 
\end{proof}

\section{Inverse problem for  general small trees}\label{sec:inverse-tree}


It was proved in \cite{CP} that the functions $\hat{\phi}_N$ are all different for trees with on at most 8 vertices; and in \cite{Pist} that there is only one pair of co-spectral trees on 9 vertices, cf. Figure~\ref{fig.5}.

\begin{figure}[H]
\centering
\begin{tikzpicture}[scale=0.55]
\coordinate (a1) at (0,0);
\coordinate (a2) at (0,2);
\coordinate (a3) at (0,4);
\coordinate (a4) at (0,6);
\coordinate (a5) at (0,8);
\coordinate (a6) at (0,10);
\coordinate (a7) at (-2,8);
\coordinate (a8) at (2,8);
\coordinate (a9) at (2,6);
\draw[thick] (a1) -- (a6);
\draw[thick] (a7) -- (a8);
\draw[thick] (a4) -- (a9);
\draw[fill] (a1) circle (1.75pt);
\draw[fill] (a2) circle (1.75pt);
\draw[fill] (a3) circle (1.75pt);
\draw[fill] (a4) circle (1.75pt);
\draw[fill] (a5) circle (1.75pt);
\draw[fill] (a6) circle (1.75pt);
\draw[fill] (a7) circle (1.75pt);
\draw[fill] (a8) circle (1.75pt);
\draw[fill] (a9) circle (1.75pt);
\end{tikzpicture}
\qquad
\begin{tikzpicture}[scale=0.55]
\coordinate (a1) at (0,0);
\coordinate (a2) at (0,2);
\coordinate (a3) at (0,4);
\coordinate (a4) at (0,6);
\coordinate (a5) at (0,8);
\coordinate (a6) at (-2,4);
\coordinate (a7) at (-2,8);
\coordinate (a8) at (2,8);
\coordinate (a9) at (2,4);
\draw[thick] (a1) -- (a5);
\draw[thick] (a7) -- (a8);
\draw[thick] (a6) -- (a9);
\draw[fill] (a1) circle (1.75pt);
\draw[fill] (a2) circle (1.75pt);
\draw[fill] (a3) circle (1.75pt);
\draw[fill] (a4) circle (1.75pt);
\draw[fill] (a5) circle (1.75pt);
\draw[fill] (a6) circle (1.75pt);
\draw[fill] (a7) circle (1.75pt);
\draw[fill] (a8) circle (1.75pt);
\draw[fill] (a9) circle (1.75pt);
\end{tikzpicture}
\\[5pt]
\caption{Co-spectral trees of 9 vertices, taken from \cite{Pist}.}\label{fig.5}
\end{figure}
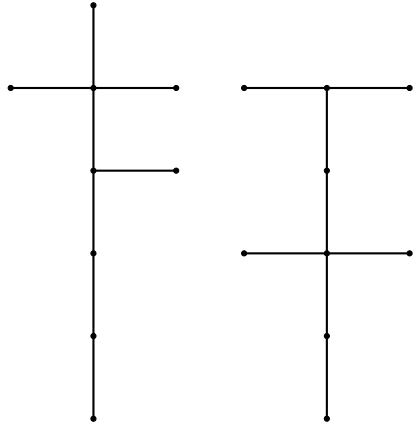


The corresponding characteristic polynomial is 
$$
P_{\Graph,\Graph}(z)=-4z^3(z^2-1)(4z^2-3)(3z^2-1).
$$
The corresponding  leading term  of  characteristic function is
\begin{equation}
\label{5.1}
\hat{\phi}_N(\lambda)=4\sqrt{\lambda}\sin\sqrt{\lambda}\ell\cos^3\sqrt{\lambda}\ell(4\cos^2\sqrt{\lambda}\ell-3)(3\cos^2\sqrt{\lambda}\ell-1),\qquad \lambda\in \C.
\end{equation}

All non-isometric graphs obtained by attaching one lead to the left tree in Figure~\ref{fig.5} are shown in Figure~\ref{fig.6} below, while all non-isometric graphs obtained by attaching a lead to the right tree in Figure~\ref{fig.5} are shown in Figure~\ref{fig.7} below.


\begin{theorem}\label{thm:tree}
 Let $\mathcal{T}_\infty$ be a metric tree obtained by attaching a lead to an equilateral compact metric tree $\mathcal T$ of not more than 9 vertices. Under the Assumptions~\ref{assum:potential-weak}
, let $q_0=0$.
Then the $S$-function and the eigenvalues (both normal and embedded into the continuous spectrum) of $\mathcal T_\infty$ uniquely determine the shape of $\mathcal{T}$.
\end{theorem}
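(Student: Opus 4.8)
The plan is to follow the proof of Theorem~\ref{thm:determ-6v} almost verbatim, replacing the classification of small graphs by the corresponding classification of small trees. By Lemma~\ref{lem:asympt} and Corollary~\ref{cor:asympt}, knowing the $S$-function together with the location of all normal and embedded eigenvalues of $\mathcal T_\infty$ allows one to recover the leading term $\hat E$ of the Jost function, and hence --- through the decomposition
\[
\hat\phi_N(\lambda)=\frac{\hat E(\sqrt\lambda)+\hat E(-\sqrt\lambda)}{2},\qquad
\hat\phi_D(\lambda)=\frac{\hat E(\sqrt\lambda)-\hat E(-\sqrt\lambda)}{2i\sqrt\lambda}
\]
--- the pair $(\hat\phi_N,\hat\phi_D)$ of leading terms of the characteristic functions of Problems I and II. These leading terms are exactly the characteristic functions produced by identically zero potentials on all finite edges; since the zero potential satisfies Assumption~\ref{assum:potential}, Theorem~\ref{thm:main} and the ensuing notation apply and express $\hat\phi_N$ and $\hat\phi_D$ through the characteristic polynomials $P_{\mathcal T,\mathcal T}$ and $P_{\mathcal T,\hat{\mathcal T}}$ evaluated at $\hat c(\lambda,\ell)=\cos\sqrt\lambda\,\ell$. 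It therefore suffices to prove that the pair $(\hat\phi_N,\hat\phi_D)$ determines the shape of $\mathcal T$.

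Now $\hat\phi_N$ depends on $\mathcal T$ alone, while $\hat\phi_D$ depends on $\mathcal T$ \emph{and} on the vertex $\mvdi$ to which the lead is attached. By \cite{CP} the leading terms $\hat\phi_N$ are pairwise distinct among all simple connected equilateral trees on at most $8$ vertices, and by \cite{Pist} the same holds on $9$ vertices with the single exception of the co-spectral pair of Figure~\ref{fig.5}, whose common leading term is the function~\eqref{5.1}. Consequently, whenever the recovered $\hat\phi_N$ is \emph{not} equal to~\eqref{5.1}, it already singles out $\mathcal T$ among all trees on at most $9$ vertices and we are done. It remains to treat the exceptional pair, for which $\hat\phi_N$ carries no information about the shape and everything must be extracted from $\hat\phi_D$.

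For this exceptional pair I would enumerate, up to isometry, the ways of attaching a single lead to each of the two trees: attaching leads to vertices in the same orbit of the tree's automorphism group yields isometric graphs $\mathcal T_\infty$, so only one representative per orbit need be considered, and these representatives are exactly the graphs displayed in Figures~\ref{fig.6} and~\ref{fig.7} (several for each tree, indeed up to seven). For each such attachment, Theorem~\ref{thm:main} (see~\eqref{2.11-dir}) expresses $\hat\phi_D$ as the product of a power $\hat s^{E-V+r}(\lambda,\ell)$ and the characteristic polynomial $P_{\mathcal T,\hat{\mathcal T}}(\hat c(\lambda,\ell))$ of the interior subforest $\hat{\mathcal T}$ obtained by deleting the chosen root $\mvdi$ together with its incident edges. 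Since $E$, $V$ and $r=1$ are the same for every attachment to either tree, the prefactor $\hat s^{E-V+r}$ is common to all of the functions $\hat\phi_D$ in play, so the comparison reduces to deciding proportionality of the polynomials $P_{\mathcal T,\hat{\mathcal T}}(\hat c(\lambda,\ell))$ in the single variable $\cos\sqrt\lambda\,\ell$. Carrying this out exactly as in the proof of Theorem~\ref{thm:determ-6v}, one checks that these polynomials are pairwise non-proportional; in particular, none of those coming from the left tree of Figure~\ref{fig.5} is a constant multiple of any of those coming from the right tree. Since the two trees share the common $\hat\phi_N$ of~\eqref{5.1}, this separation means that the value of $\hat\phi_D$ read off from the scattering data already reveals which of the two trees underlies $\mathcal T_\infty$, and the shape of $\mathcal T$ is determined.

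The main obstacle is precisely this final verification. In contrast to the $6$-vertex situation of Theorem~\ref{thm:determ-6v}, where only $2+4$ attachments had to be distinguished, each of the present trees admits several non-isometric attachments, so the list of interior-subforest polynomials to be computed and pairwise compared is considerably longer, and one must rule out accidental proportionalities among these low-degree polynomials in $\cos\sqrt\lambda\,\ell$. The computation itself is elementary: each $P_{\mathcal T,\hat{\mathcal T}}$ is the determinant of a small matrix $z\hat D_{\mathcal T}-\hat A$ which factorizes over the connected components of the forest $\hat{\mathcal T}$, so the whole argument ultimately comes down to a finite, mechanical check.
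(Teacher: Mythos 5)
Your proposal follows the paper's own proof essentially verbatim: recover $\hat E$ (and hence the pair $\hat\phi_N,\hat\phi_D$) from the scattering data via Lemma~\ref{lem:asympt} and Corollary~\ref{cor:asympt}, invoke the classification of \cite{CP,Pist} to reduce everything to the single co-spectral pair of Figure~\ref{fig.5}, and then separate that pair by comparing the functions $\hat\phi_D$ arising from all non-isometric lead attachments shown in Figures~\ref{fig.6} and~\ref{fig.7}. The only difference is that the paper actually performs the finite check you defer, listing the fourteen polynomials $\hat\phi_D$ explicitly (computed via Theorem~\ref{thm:main}) and observing that no two of them coincide up to a constant multiple.
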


\begin{proof}
Again, by Lemma~\ref{lem:asympt} and Corollary~\ref{cor:asympt} we know that $\hat{E}(\sqrt{\lambda})$ can be derived from $S(\sqrt{\lambda})$. The functions $\hat{\phi}_N$ are different for all the trees with $V\leq 8$ vertices (see the formulae for $\varphi_i$, $i=31,\ldots,70$, in the proof of \cite[Theorem~6.3]{CP}). If we admit the number of the vertices to be 9, then all the functions $\hat{\phi}_N$ are different, except for the function given by equation \eqref{5.1} which corresponds to the trees of Figure~\ref{fig.5} (see \cite{CP2}). If the functions $\hat{\phi}_D$ are different then also both the Jost functions $\hat{E}$
and the generalized $S$-functions $\hat{S}$
are different: we conclude that the the asymptotics of the $S$-function uniquely determine the shape of the tree. 


Likewise, there are seven ways of attaching a lead to the left tree in Figure~\ref{fig.5}, cf.~Figure~\ref{fig.6}.
\begin{figure}[H]
\centering
\begin{tikzpicture}[scale=0.49]
\coordinate (a1) at (0,0);
\coordinate (a2) at (0,2);
\coordinate (a3) at (0,4);
\coordinate (a4) at (0,6);
\coordinate (a5) at (0,8);
\coordinate (a6) at (0,10);
\coordinate (a7) at (-2,8);
\coordinate (a8) at (2,8);
\coordinate (a9) at (2,6);
\coordinate (infty6) at (4,10);
\node at (infty6) [anchor=south] {$\infty$};
\draw[thick,-stealth] (a6) -- (infty6);
\draw[thick] (a1) -- (a6);
\draw[thick] (a7) -- (a8);
\draw[thick] (a4) -- (a9);
\draw[fill] (a1) circle (1.75pt);
\draw[fill] (a2) circle (1.75pt);
\draw[fill] (a3) circle (1.75pt);
\draw[fill] (a4) circle (1.75pt);
\draw[fill] (a5) circle (1.75pt);
\draw[fill] (a6) circle (1.75pt);
\draw[fill] (a7) circle (1.75pt);
\draw[fill] (a8) circle (1.75pt);
\draw[fill] (a9) circle (1.75pt);
\end{tikzpicture}
\qquad
\begin{tikzpicture}[scale=0.49]
\coordinate (a1) at (0,0);
\coordinate (a2) at (0,2);
\coordinate (a3) at (0,4);
\coordinate (a4) at (0,6);
\coordinate (a5) at (0,8);
\coordinate (a6) at (0,10);
\coordinate (a7) at (-2,8);
\coordinate (a8) at (2,8);
\coordinate (a9) at (2,6);
\coordinate (infty4) at (6,6);
\node at (infty4) [anchor=south] {$\infty$};
\draw[thick,-stealth] (a4) -- (infty4);
\draw[thick] (a1) -- (a6);
\draw[thick] (a7) -- (a8);
\draw[fill] (a1) circle (1.75pt);
\draw[fill] (a2) circle (1.75pt);
\draw[fill] (a3) circle (1.75pt);
\draw[fill] (a4) circle (1.75pt);
\draw[fill] (a5) circle (1.75pt);
\draw[fill] (a6) circle (1.75pt);
\draw[fill] (a7) circle (1.75pt);
\draw[fill] (a8) circle (1.75pt);
\draw[fill] (a9) circle (1.75pt);
\end{tikzpicture}
\qquad
\begin{tikzpicture}[scale=0.49]
\coordinate (a1) at (0,0);
\coordinate (a2) at (0,2);
\coordinate (a3) at (0,4);
\coordinate (a4) at (0,6);
\coordinate (a5) at (0,8);
\coordinate (a6) at (0,10);
\coordinate (a7) at (-2,8);
\coordinate (a8) at (2,8);
\coordinate (a9) at (2,6);
\coordinate (infty5) at (-3.4,4.6);
\node at (infty5) [anchor=south] {$\infty$};
\draw[thick,-stealth] (a5) -- (infty5);
\draw[thick] (a1) -- (a6);
\draw[thick] (a7) -- (a8);
\draw[thick] (a4) -- (a9);
\draw[fill] (a1) circle (1.75pt);
\draw[fill] (a2) circle (1.75pt);
\draw[fill] (a3) circle (1.75pt);
\draw[fill] (a4) circle (1.75pt);
\draw[fill] (a5) circle (1.75pt);
\draw[fill] (a6) circle (1.75pt);
\draw[fill] (a7) circle (1.75pt);
\draw[fill] (a8) circle (1.75pt);
\draw[fill] (a9) circle (1.75pt);
\end{tikzpicture}
\qquad
\begin{tikzpicture}[scale=0.49]
\coordinate (a1) at (0,0);
\coordinate (a2) at (0,2);
\coordinate (a3) at (0,4);
\coordinate (a4) at (0,6);
\coordinate (a5) at (0,8);
\coordinate (a6) at (0,10);
\coordinate (a7) at (-2,8);
\coordinate (a8) at (2,8);
\coordinate (a9) at (2,6);
\coordinate (infty4) at (-4,6);
\node at (infty4) [anchor=south] {$\infty$};
\draw[thick,-stealth] (a4) -- (infty4);
\draw[thick] (a1) -- (a6);
\draw[thick] (a7) -- (a8);
\draw[thick] (a4) -- (a9);
\draw[fill] (a1) circle (1.75pt);
\draw[fill] (a2) circle (1.75pt);
\draw[fill] (a3) circle (1.75pt);
\draw[fill] (a4) circle (1.75pt);
\draw[fill] (a5) circle (1.75pt);
\draw[fill] (a6) circle (1.75pt);
\draw[fill] (a7) circle (1.75pt);
\draw[fill] (a8) circle (1.75pt);
\draw[fill] (a9) circle (1.75pt);
\end{tikzpicture}
\qquad
\begin{tikzpicture}[scale=0.49]
\coordinate (a1) at (0,0);
\coordinate (a2) at (0,2);
\coordinate (a3) at (0,4);
\coordinate (a4) at (0,6);
\coordinate (a5) at (0,8);
\coordinate (a6) at (0,10);
\coordinate (a7) at (-2,8);
\coordinate (a8) at (2,8);
\coordinate (a9) at (2,6);
\coordinate (infty3) at (4,4);
\node at (infty3) [anchor=south] {$\infty$};
\draw[thick,-stealth] (a3) -- (infty3);
\draw[thick] (a1) -- (a6);
\draw[thick] (a7) -- (a8);
\draw[thick] (a4) -- (a9);
\draw[fill] (a1) circle (1.75pt);
\draw[fill] (a2) circle (1.75pt);
\draw[fill] (a3) circle (1.75pt);
\draw[fill] (a4) circle (1.75pt);
\draw[fill] (a5) circle (1.75pt);
\draw[fill] (a6) circle (1.75pt);
\draw[fill] (a7) circle (1.75pt);
\draw[fill] (a8) circle (1.75pt);
\draw[fill] (a9) circle (1.75pt);
\end{tikzpicture}
\qquad
\begin{tikzpicture}[scale=0.49]
\coordinate (a1) at (0,0);
\coordinate (a2) at (0,2);
\coordinate (a3) at (0,4);
\coordinate (a4) at (0,6);
\coordinate (a5) at (0,8);
\coordinate (a6) at (0,10);
\coordinate (a7) at (-2,8);
\coordinate (a8) at (2,8);
\coordinate (a9) at (2,6);
\coordinate (infty2) at (4,2);
\node at (infty2) [anchor=south] {$\infty$};
\draw[thick,-stealth] (a2) -- (infty2);
\draw[thick] (a1) -- (a6);
\draw[thick] (a7) -- (a8);
\draw[thick] (a4) -- (a9);
\draw[fill] (a1) circle (1.75pt);
\draw[fill] (a2) circle (1.75pt);
\draw[fill] (a3) circle (1.75pt);
\draw[fill] (a4) circle (1.75pt);
\draw[fill] (a5) circle (1.75pt);
\draw[fill] (a6) circle (1.75pt);
\draw[fill] (a7) circle (1.75pt);
\draw[fill] (a8) circle (1.75pt);
\draw[fill] (a9) circle (1.75pt);
\end{tikzpicture}
\qquad
\begin{tikzpicture}[scale=0.49]
\coordinate (a1) at (0,0);
\coordinate (a2) at (0,2);
\coordinate (a3) at (0,4);
\coordinate (a4) at (0,6);
\coordinate (a5) at (0,8);
\coordinate (a6) at (0,10);
\coordinate (a7) at (-2,8);
\coordinate (a8) at (2,8);
\coordinate (a9) at (2,6);
\coordinate (infty1) at (4,0);
\node at (infty1) [anchor=south] {$\infty$};
\draw[thick,-stealth] (a1) -- (infty1);
\draw[thick] (a1) -- (a6);
\draw[thick] (a7) -- (a8);
\draw[thick] (a4) -- (a9);
\draw[fill] (a1) circle (1.75pt);
\draw[fill] (a2) circle (1.75pt);
\draw[fill] (a3) circle (1.75pt);
\draw[fill] (a4) circle (1.75pt);
\draw[fill] (a5) circle (1.75pt);
\draw[fill] (a6) circle (1.75pt);
\draw[fill] (a7) circle (1.75pt);
\draw[fill] (a8) circle (1.75pt);
\draw[fill] (a9) circle (1.75pt);
\end{tikzpicture}
\\[5pt]
\caption{Co-spectral trees obtained by attaching one lead to the left tree of Figure~\ref{fig.5}.}\label{fig.6}
\end{figure}
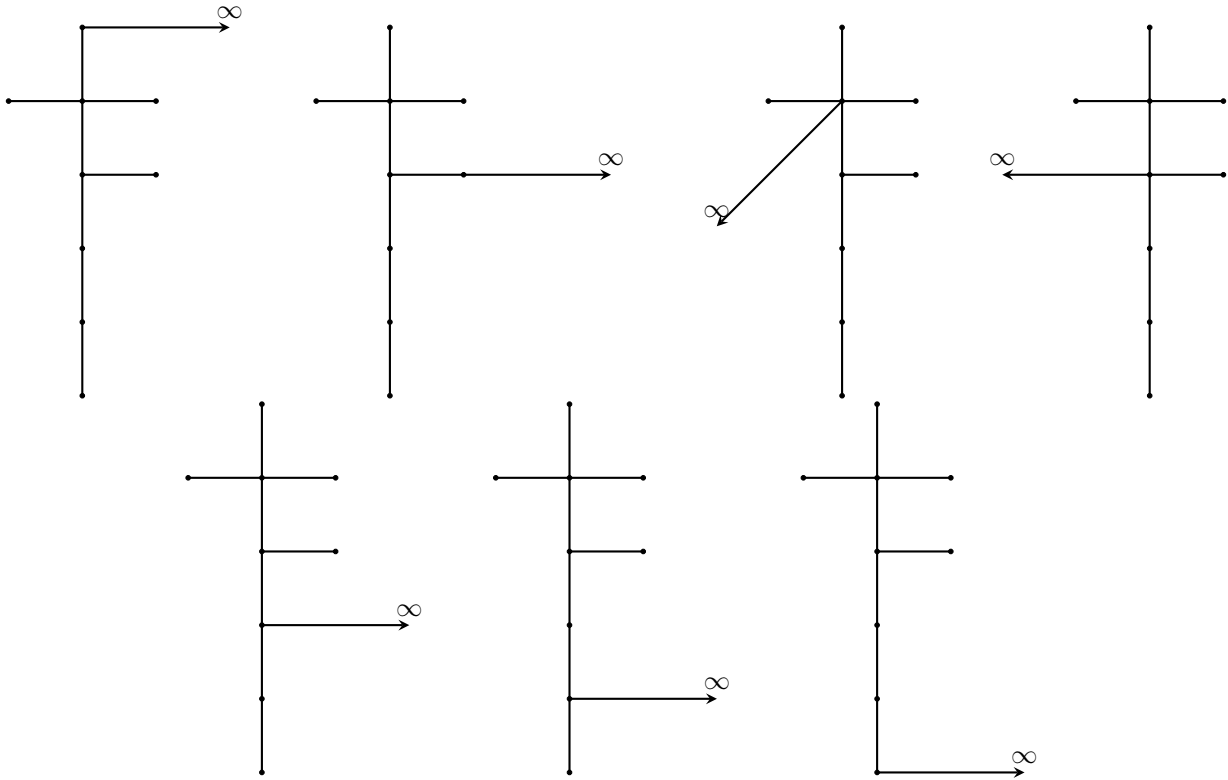


The functions $\hat{\phi}_D$ for all graphs in Figure~\ref{fig.6}  are the following ones, which we calculated using Theorem~\ref{thm:main}.


\begin{equation}
\begin{split}
\hat{\phi}_D(\lambda)&=48\cos^8\sqrt{\lambda}\ell-88\cos^6\sqrt{\lambda}\ell +49\cos^4\sqrt{\lambda}\ell-8\cos^2\sqrt{\lambda}\ell,\\
\hat{\phi}_D(\lambda)&=48\cos^8\sqrt{\lambda}\ell-84\cos^6\sqrt{\lambda}\ell +40\cos^4\sqrt{\lambda}\ell-3\cos^2\sqrt{\lambda}\ell,\\
\hat{\phi}_D(\lambda)&=12\cos^8\sqrt{\lambda}\ell-15\cos^6\sqrt{\lambda}\ell +4\cos^4\sqrt{\lambda}\ell,\\
\hat{\phi}_D(\lambda)&=16\cos^8\sqrt{\lambda}\ell-24\cos^6\sqrt{\lambda}\ell +9\cos^4\sqrt{\lambda}\ell,\\
\hat{\phi}_D(\lambda)&=24\cos^8\sqrt{\lambda}\ell-40\cos^6\sqrt{\lambda}\ell +20\cos^4\sqrt{\lambda}\ell-3\cos^2\sqrt{\lambda}\ell,\\
\hat{\phi}_D(\lambda)&=24\cos^8\sqrt{\lambda}\ell-35\cos^6\sqrt{\lambda}\ell +12\cos^4\sqrt{\lambda}\ell,\\
\hat{\phi}_D(\lambda)&=48\cos^8\sqrt{\lambda}\ell-76\cos^6\sqrt{\lambda}\ell +32\cos^4\sqrt{\lambda}\ell-3\cos^2\sqrt{\lambda}\ell.
\end{split}
\end{equation}

Likewise, 
there are seven ways of attaching a lead to the right tree in Figure~\ref{fig.5}, see Figure~\ref{fig.7}.

\begin{figure}[H]
\centering
\begin{tikzpicture}[scale=0.49]
\coordinate (a1) at (0,0);
\coordinate (a2) at (0,2);
\coordinate (a3) at (0,4);
\coordinate (a4) at (0,6);
\coordinate (a5) at (0,8);
\coordinate (a6) at (-2,4);
\coordinate (a7) at (-2,8);
\coordinate (a8) at (2,8);
\coordinate (a9) at (2,4);
\coordinate (infty8) at (6,8);
\node at (infty8) [anchor=south] {$\infty$};
\draw[thick,-stealth] (a8) -- (infty8);
\draw[thick] (a1) -- (a5);
\draw[thick] (a7) -- (a8);
\draw[thick] (a6) -- (a9);
\draw[fill] (a1) circle (1.75pt);
\draw[fill] (a2) circle (1.75pt);
\draw[fill] (a3) circle (1.75pt);
\draw[fill] (a4) circle (1.75pt);
\draw[fill] (a5) circle (1.75pt);
\draw[fill] (a6) circle (1.75pt);
\draw[fill] (a7) circle (1.75pt);
\draw[fill] (a8) circle (1.75pt);
\draw[fill] (a9) circle (1.75pt);
\end{tikzpicture}
\qquad
\begin{tikzpicture}[scale=0.49]
\coordinate (a1) at (0,0);
\coordinate (a2) at (0,2);
\coordinate (a3) at (0,4);
\coordinate (a4) at (0,6);
\coordinate (a5) at (0,8);
\coordinate (a6) at (-2,4);
\coordinate (a7) at (-2,8);
\coordinate (a8) at (2,8);
\coordinate (a9) at (2,4);
\coordinate (infty5) at (3.4,5.6);
\node at (infty5) [anchor=south] {$\infty$};
\draw[thick,-stealth] (a5) -- (infty5);
\draw[thick] (a1) -- (a5);
\draw[thick] (a7) -- (a8);
\draw[thick] (a6) -- (a9);
\draw[fill] (a1) circle (1.75pt);
\draw[fill] (a2) circle (1.75pt);
\draw[fill] (a3) circle (1.75pt);
\draw[fill] (a4) circle (1.75pt);
\draw[fill] (a5) circle (1.75pt);
\draw[fill] (a6) circle (1.75pt);
\draw[fill] (a7) circle (1.75pt);
\draw[fill] (a8) circle (1.75pt);
\draw[fill] (a9) circle (1.75pt);
\end{tikzpicture}
\qquad
\begin{tikzpicture}[scale=0.49]
\coordinate (a1) at (0,0);
\coordinate (a2) at (0,2);
\coordinate (a3) at (0,4);
\coordinate (a4) at (0,6);
\coordinate (a5) at (0,8);
\coordinate (a6) at (-2,4);
\coordinate (a7) at (-2,8);
\coordinate (a8) at (2,8);
\coordinate (a9) at (2,4);
\coordinate (infty4) at (4,6);
\node at (infty4) [anchor=south] {$\infty$};
\draw[thick,-stealth] (a4) -- (infty4);
\draw[thick] (a1) -- (a5);
\draw[thick] (a7) -- (a8);
\draw[thick] (a6) -- (a9);
\draw[fill] (a1) circle (1.75pt);
\draw[fill] (a2) circle (1.75pt);
\draw[fill] (a3) circle (1.75pt);
\draw[fill] (a4) circle (1.75pt);
\draw[fill] (a5) circle (1.75pt);
\draw[fill] (a6) circle (1.75pt);
\draw[fill] (a7) circle (1.75pt);
\draw[fill] (a8) circle (1.75pt);
\draw[fill] (a9) circle (1.75pt);
\end{tikzpicture}
\qquad
\begin{tikzpicture}[scale=0.49]
\coordinate (a1) at (0,0);
\coordinate (a2) at (0,2);
\coordinate (a3) at (0,4);
\coordinate (a4) at (0,6);
\coordinate (a5) at (0,8);
\coordinate (a6) at (-2,4);
\coordinate (a7) at (-2,8);
\coordinate (a8) at (2,8);
\coordinate (a9) at (2,4);
\coordinate (infty9) at (6,4);
\node at (infty9) [anchor=south] {$\infty$};
\draw[thick,-stealth] (a9) -- (infty9);
\draw[thick] (a1) -- (a5);
\draw[thick] (a7) -- (a8);
\draw[thick] (a6) -- (a9);
\draw[fill] (a1) circle (1.75pt);
\draw[fill] (a2) circle (1.75pt);
\draw[fill] (a3) circle (1.75pt);
\draw[fill] (a4) circle (1.75pt);
\draw[fill] (a5) circle (1.75pt);
\draw[fill] (a6) circle (1.75pt);
\draw[fill] (a7) circle (1.75pt);
\draw[fill] (a8) circle (1.75pt);
\draw[fill] (a9) circle (1.75pt);
\end{tikzpicture}
\qquad
\begin{tikzpicture}[scale=0.49]
\coordinate (a1) at (0,0);
\coordinate (a2) at (0,2);
\coordinate (a3) at (0,4);
\coordinate (a4) at (0,6);
\coordinate (a5) at (0,8);
\coordinate (a6) at (-2,4);
\coordinate (a7) at (-2,8);
\coordinate (a8) at (2,8);
\coordinate (a9) at (2,4);
\coordinate (infty3) at (3.4,1.6);
\node at (infty3) [anchor=south] {$\infty$};
\draw[thick,-stealth] (a3) -- (infty3);
\draw[thick] (a1) -- (a5);
\draw[thick] (a7) -- (a8);
\draw[thick] (a6) -- (a9);
\draw[fill] (a1) circle (1.75pt);
\draw[fill] (a2) circle (1.75pt);
\draw[fill] (a3) circle (1.75pt);
\draw[fill] (a4) circle (1.75pt);
\draw[fill] (a5) circle (1.75pt);
\draw[fill] (a6) circle (1.75pt);
\draw[fill] (a7) circle (1.75pt);
\draw[fill] (a8) circle (1.75pt);
\draw[fill] (a9) circle (1.75pt);
\end{tikzpicture}
\qquad
\begin{tikzpicture}[scale=0.49]
\coordinate (a1) at (0,0);
\coordinate (a2) at (0,2);
\coordinate (a3) at (0,4);
\coordinate (a4) at (0,6);
\coordinate (a5) at (0,8);
\coordinate (a6) at (-2,4);
\coordinate (a7) at (-2,8);
\coordinate (a8) at (2,8);
\coordinate (a9) at (2,4);
\coordinate (infty2) at (4,2);
\node at (infty2) [anchor=south] {$\infty$};
\draw[thick,-stealth] (a2) -- (infty2);
\draw[thick] (a1) -- (a5);
\draw[thick] (a7) -- (a8);
\draw[thick] (a6) -- (a9);
\draw[fill] (a1) circle (1.75pt);
\draw[fill] (a2) circle (1.75pt);
\draw[fill] (a3) circle (1.75pt);
\draw[fill] (a4) circle (1.75pt);
\draw[fill] (a5) circle (1.75pt);
\draw[fill] (a6) circle (1.75pt);
\draw[fill] (a7) circle (1.75pt);
\draw[fill] (a8) circle (1.75pt);
\draw[fill] (a9) circle (1.75pt);
\end{tikzpicture}
\qquad
\begin{tikzpicture}[scale=0.49]
\coordinate (a1) at (0,0);
\coordinate (a2) at (0,2);
\coordinate (a3) at (0,4);
\coordinate (a4) at (0,6);
\coordinate (a5) at (0,8);
\coordinate (a6) at (-2,4);
\coordinate (a7) at (-2,8);
\coordinate (a8) at (2,8);
\coordinate (a9) at (2,4);
\coordinate (infty1) at (4,0);
\node at (infty1) [anchor=south] {$\infty$};
\draw[thick,-stealth] (a1) -- (infty1);
\draw[thick] (a1) -- (a5);
\draw[thick] (a7) -- (a8);
\draw[thick] (a6) -- (a9);
\draw[fill] (a1) circle (1.75pt);
\draw[fill] (a2) circle (1.75pt);
\draw[fill] (a3) circle (1.75pt);
\draw[fill] (a4) circle (1.75pt);
\draw[fill] (a5) circle (1.75pt);
\draw[fill] (a6) circle (1.75pt);
\draw[fill] (a7) circle (1.75pt);
\draw[fill] (a8) circle (1.75pt);
\draw[fill] (a9) circle (1.75pt);
\end{tikzpicture}
\\[5pt]
\caption{Co-spectral trees obtained by attaching one lead to the right tree of Figure~\ref{fig.5}.}\label{fig.7}
\end{figure}

The functions $\hat{\phi}_D$ corresponding to the graphs in Figure~\ref{fig.7} are given by the following expressions:

\begin{equation}
\begin{split}
\hat{\phi}_D(\lambda)&=48\cos^8\sqrt{\lambda}\ell-76\cos^6\sqrt{\lambda}\ell+19\cos^4\sqrt{\lambda}\ell, \\
\hat{\phi}_D(\lambda)&=16\cos^8\sqrt{\lambda}\ell-20\cos^6\sqrt{\lambda}\ell+5\cos^4\sqrt{\lambda}\ell, \\
\hat{\phi}_D(\lambda)&=24\cos^8\sqrt{\lambda}\ell-13\cos^6\sqrt{\lambda}\ell-8\cos^4\sqrt{\lambda}\ell +4\cos^2\sqrt{\lambda}\ell, \\
\hat{\phi}_D(\lambda)&=48\cos^8\sqrt{\lambda}\ell-88\cos^6\sqrt{\lambda}\ell+48\cos^4\sqrt{\lambda}\ell-7\cos^2\sqrt{\lambda}\ell, \\
\hat{\phi}_D(\lambda)&=12\cos^8\sqrt{\lambda}\ell-16\cos^6\sqrt{\lambda}\ell+5\cos^4\sqrt{\lambda}\ell, \\
\hat{\phi}_D(\lambda)&=24\cos^8\sqrt{\lambda}\ell-32\cos^6\sqrt{\lambda}\ell+9\cos^4\sqrt{\lambda}\ell, \\
\hat{\phi}_D(\lambda)&=48\cos^8\sqrt{\lambda}\ell-84\cos^6\sqrt{\lambda}\ell+44\cos^4\sqrt{\lambda}\ell- 7\cos^2\sqrt{\lambda}\ell.
\end{split}
\end{equation}
This concludes the proof.
\end{proof}

\section{Inverse problem for small fuzzy balls}
\label{sec:exa-butgrout}

Let us now recall a class of simple, non-bipartite graphs identified in \cite{ButGro11}. So-called \textit{fuzzy balls} $\mFB_{r,s}$ are combinatorial graphs on $r+s+2$ vertices
constructed by attaching to a complete graph $\mK_n$ (the ``bulk'') on $n=r+s$ vertices $\mvdi,\ldots,\mv_n$ two vertices $\mw_1,\mw_2$ in such a way that $\mw_1$ is adjacent to $\mvdi,\ldots,\mv_r$ and $\mw_2$ is adjacent to $\mv_{r+1},\ldots,\mv_{r+s}$:  we call $\mw_1,\mw_2$ the \textit{off-bulk vertices}. Then Butler and Grout showed in \cite[Example~1]{ButGro11} that, given $n\in \N$, $n\ge 4$, 
the class
\[
{\mathcal{FB}_n}:=\{\mFB_{r,s}:r+s=n\}
\]
consists of combinatorial graphs with same number of vertices and edges, and that additionally are co-spectral with respect to the normalized Laplacian.  Observe that we have already encountered fuzzy balls: indeed, the graphs in Figure~\ref{fig.2} are precisely $\mFB_{1,3}$ and $\mFB_{2,2}$.

It is known \cite{ButGro11}  that, for any given $n$, in case of $q_{\me}\equiv 0$ on all edges (free Laplacians) and standard vertex conditions all fuzzy balls in $\mathcal{FB}_n$ are co-spectral. That means that they have the same number of edges, and are co-spectral with respect to the normalized Laplacians.
Thus, we cannot distinguish them using the first and the second terms of the eigenvalue asymptotics in case of the Schrödinger problem. 
 Let us show that we can, however, resolve the co-spectrality by attaching one lead. 
 \begin{theorem} Let $\mathsf{G}_i\in{\mathcal{FB}_n}$ ($i=1,2$) with $r_i\geq 1$, $s_i\geq 1$, $r_i+s_i=n\geq 4$, $r_1\not=r_2$, and consider the corresponding equilateral graphs $\Graph_i$, each of whose edges has length $\ell$. 
Under the Assumption~\ref{assum:potential-weak}, 
attach one lead to either of the off-bulk vertices of $\mG_1$, and one lead to either of the off-bulk vertices of $\mG_2$. Let $q_0=0$ on each of attached leads.

Then we obtain different $\hat{\phi}_D$ and can thus distinguish the co-spectral metric graphs $\Graph_1,\Graph_2$. 
\end{theorem}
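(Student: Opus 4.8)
The plan is to make the degree of the off-bulk vertex carrying the lead visible in the leading Dirichlet function $\hat{\phi}_D$, and then to read it off. Since the identically zero potential satisfies Assumption~\ref{assum:potential}, I would apply Theorem~\ref{thm:main} to the free Laplacian with $\mVdi$ equal to the single off-bulk vertex to which the lead is attached (so $\#\mVdi=1$), obtaining
\[
\hat{\phi}_D(\lambda)=\left(\frac{\sin\sqrt{\lambda}\ell}{\sqrt{\lambda}}\right)^{E-V+1}P_{\Graph,\hat{\Graph}}\bigl(\cos\sqrt{\lambda}\ell\bigr).
\]
For every member of $\mathcal{FB}_n$ one has $V=n+2$ and $E=\binom n2+n=\tfrac{n(n+1)}2$, so $E-V+1=\tfrac{(n-2)(n+1)}2$ depends on $n$ alone; the whole problem thus reduces to the polynomial $P_{\Graph,\hat{\Graph}}$ of Definition~\ref{def:matrices-d-a}.

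The main step is the evaluation of $P_{\Graph,\hat{\Graph}}(z)=\det(z\hat{D}_\Graph-\hat{A})$. Write $d$ for the degree of the off-bulk vertex carrying the lead (so $d=r$ or $d=s$); deleting it leaves the complete bulk $\mK_n$ together with the other off-bulk vertex, whose degree is $n-d$. I would use the equitable partition of $\hat{\Graph}$ into the $d$ bulk vertices formerly joined to the deleted vertex, the $n-d$ bulk vertices joined to the surviving off-bulk vertex, and the surviving off-bulk vertex itself. On vectors that are constant on each of these three classes -- a three-dimensional subspace invariant under $M:=z\hat{D}_\Graph-\hat{A}$ -- the operator $M$ acts as the quotient matrix
\[
\tilde{M}=\begin{pmatrix} zn-d+1 & -(n-d) & 0\\ -d & zn-n+d+1 & -1\\ 0 & -(n-d) & z(n-d)\end{pmatrix},
\]
whereas on the complementary space of vectors summing to zero on each bulk class -- of dimension $(d-1)+(n-d-1)=n-2$ -- one verifies directly that $M$ acts as multiplication by $zn+1$. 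Computing the (routine) $3\times 3$ determinant then gives
\[
P_{\Graph,\hat{\Graph}}(z)=(zn+1)^{n-2}(n-d)\bigl[n^2z^3+n(2-n)z^2+(1-2n)z+(d-1)\bigr].
\]
The attached degree $d$ enters the cubic factor only through its constant term $d-1$; all other coefficients depend on $n$ alone.

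It remains to compare two fuzzy balls. Writing $d_i$ for the degree of the vertex carrying the lead on $\Graph_i$, the two functions $\hat{\phi}_D$ share the identical analytic prefactor $\bigl(\tfrac{\sin\sqrt{\lambda}\ell}{\sqrt{\lambda}}\bigr)^{E-V+1}(zn+1)^{n-2}$, so $\hat{\phi}_D$ for $\Graph_1$ is a constant multiple of $\hat{\phi}_D$ for $\Graph_2$ precisely when the polynomials $(n-d_i)\bigl[n^2z^3+n(2-n)z^2+(1-2n)z+(d_i-1)\bigr]$ ($i=1,2$) are proportional; comparing the coefficients of $z^3$ forces the ratio to be $(n-d_1)/(n-d_2)$, and comparing the constant terms then forces $d_1=d_2$ (here $n-d_i\ge 1$ because $1\le r_i,s_i\le n-1$). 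Finally, using the isomorphism $\mFB_{r,s}\cong\mFB_{s,r}$ I would normalise $r_i\le s_i$; then $r_1\ne r_2$ with $r_1,r_2\le n/2$ yields $r_1+r_2<n$, whence the unordered off-bulk degree sets $\{r_1,n-r_1\}$ and $\{r_2,n-r_2\}$ are disjoint. Consequently $d_1\ne d_2$ for every choice of off-bulk vertices, and the $\hat{\phi}_D$ are genuinely different. Since all graphs in $\mathcal{FB}_n$ are co-spectral for Problem~I and hence share the same $\hat{\phi}_N$, this difference propagates to different leading Jost terms $\hat{E}=\hat{\phi}_N+i\sqrt{\lambda}\,\hat{\phi}_D$ and, by Corollary~\ref{cor:asympt}, distinguishes $\Graph_1$ and $\Graph_2$.

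I expect the determinant computation to be the only real obstacle: one must check carefully that the within-class zero-sum vectors are honest eigenvectors of $M$ for the eigenvalue $zn+1$ -- in particular that the coupling to the surviving off-bulk vertex cancels -- so that the factor $(zn+1)^{n-2}$ splits off cleanly; the remainder is bookkeeping. As a safeguard I would specialise to $n=4$, where $\mFB_{1,3}$ and $\mFB_{2,2}$ are the graphs of Figure~\ref{fig.2}: the displayed formula for $P_{\Graph,\hat{\Graph}}$ reproduces, up to sign, exactly the off-bulk entries among the lists of $\hat{\phi}_D$ in Figures~\ref{fig.3} and~\ref{fig.4}.
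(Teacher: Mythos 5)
Your proposal is correct, and it is genuinely not the paper's argument: the two proofs diverge exactly where yours does its main work. The paper never evaluates the interior-subgraph polynomials at all. Instead it asserts that the sine-exponent itself depends on the attachment vertex, writing $\hat\phi_{D,i}(\lambda)=\bigl(\tfrac{\sin\sqrt{\lambda}\ell}{\sqrt{\lambda}}\bigr)^{E-V+r_i}P_{\Graph,\tilde{\Graph}_i}(\cos\sqrt{\lambda}\ell)$ with $r_i$ the degree of the off-bulk vertex carrying the lead, and then concludes by a pure growth comparison: if the two functions agreed identically, cancellation would give $\bigl(\tfrac{\sin\sqrt{\lambda}\ell}{\sqrt{\lambda}}\bigr)^{r_2-r_1}P_{\Graph,\tilde{\Graph}_2}(\cos\sqrt{\lambda}\ell)\equiv P_{\Graph,\tilde{\Graph}_1}(\cos\sqrt{\lambda}\ell)$, and along a sequence $\lambda_k\to+\infty$ on which $\cos\sqrt{\lambda_k}\ell$ is frozen at a non-root value the left-hand side tends to $0$ while the right-hand side does not. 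So the paper's route buys brevity (no determinant computed, and any two attachment vertices of different degrees are separated at once), whereas your route buys explicitness: identical prefactors for both graphs, with the entire distinction carried by the constant term $d_i-1$ of the cubic factor of $P_{\Graph,\hat{\Graph}}$.

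The deeper point is that your route is the one consistent with the paper's own framework, so the extra work you do is not optional. Theorem~\ref{thm:main} gives the exponent $E-V+r$ with $r=\#\mVdi$, hence $E-V+1$ for a single attachment vertex, exactly as you use; every formula in Section~4 carries the factor $(\sin\sqrt{\lambda}\ell/\sqrt{\lambda})^{5}=(\sin\sqrt{\lambda}\ell/\sqrt{\lambda})^{E-V+1}$ whether the lead is attached at a vertex of degree $1$, $2$, $3$ or $4$; and since $P_{\Graph,\hat{\Graph}}$ has degree $V-1$, per-period eigenvalue counting (Weyl law: $E$ eigenvalues per period) forces the exponent $E-(V-1)$, while a degree-dependent exponent $E-V+r_i$ would yield $E+r_i-1>E$ zeros per period. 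The paper's short argument therefore rests on a formula that contradicts its own Theorem~\ref{thm:main} and its own Section~4 lists; your computation -- which I checked, and which indeed reproduces, up to sign, the off-bulk entries of the lists attached to Figures~\ref{fig.3} and~\ref{fig.4} at $n=4$, $d=1,2,3$ -- is what actually establishes the statement. Two further points in your favour: you make explicit the normalization modulo the isomorphism $\mFB_{r,s}\cong\mFB_{s,r}$ and the freedom in choosing which off-bulk vertex carries each lead, neither of which the paper's proof addresses.
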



 \begin{proof}
For each $\mG\in{\mathcal {FB}}_n$, the total number of the vertices is $V=n+2$ and the total number of the edges is $E=\frac{n(n-1)}{2}+n=\frac{n(n+1)}{2}$. Then $E-V=\frac{n^2-n-4}{2}$: Upon associating the metric graphs $\Graph_1$ and $\Graph_2$ with any two combinatorial graphs $\mG_1,\mG_2\in{\mathcal{FB}_n}$, by Theorem~\ref{thm:main}
 we have
$$
\hat{\phi}_{D,1}(\lambda)=\left(\frac{\sin\sqrt{\lambda}\ell}{\sqrt{\lambda}}\right)^{ \frac{n^2-n-4}{2}+r_1} P_{\Graph,\tilde{\Graph}_1}(\cos\sqrt{\lambda}\ell)
$$
for the first graph and
$$
\hat{\phi}_{D,2}(\lambda)=\left(\frac{\sin\sqrt{\lambda}\ell}{\sqrt{\lambda}}\right)^{ \frac{n^2-n-4}{2}+r_2} P_{\Graph,\tilde{\Graph}_2}(\cos\sqrt{\lambda}\ell)
$$
for the second one.

Let $r_2>r_1$. Suppose 
$$\left(\frac{\sin\sqrt{\lambda}l}{\sqrt{\lambda}}\right)^{ \frac{n^2-n-4}{2}+r_2} P_{\Graph,\tilde{\Graph}_2}(\cos\sqrt{\lambda}\ell)\equiv\left(\frac{\sin\sqrt{\lambda}l}{\sqrt{\lambda}}\right)^{ \frac{n^2-n-4}{2}+r_1} P_{\Graph,\tilde{\Graph}_1}(\cos\sqrt{\lambda}\ell),
$$ 
that is,
\begin{equation}\label{eq:vyach-details}
\left(\frac{\sin\sqrt{\lambda}l}{\sqrt{\lambda}}\right)^{r_2-r_1} P_{\Graph,\tilde{\Graph}_2}(\cos\sqrt{\lambda}\ell)\equiv  P_{\Graph,\tilde{\Graph}_1}(\cos\sqrt{\lambda}\ell).
\end{equation}
Let $\{\alpha_i^{m}\}$ be the set of zeroes of $ P_{\Graph,\tilde{\Graph}_m}(z)$, $m=1,2$. Let $\theta$ be a real number and $\theta\notin\{\alpha_i^1\}\cup\{\alpha_i^2\}\cup\{0\}\cup\{\pi\}$. Let $\sqrt{\tau}:=\frac{1}{l}\arccos \theta$ and $\lambda_k=(\sqrt{\tau}+2\pi k)$. Then as $k\to +\infty$ the left hand side of \eqref{eq:vyach-details} tend to $0$, while the right hand side does not -- a contradiction.
%
\end{proof}


\begin{remark}
 If we attach one lead to any vertex of degree $n$, then we have the factor 
$$
\left(\frac{\sin\sqrt{\lambda}\ell}{\sqrt{\lambda}}\right)^{ \frac{n^2-n-4}{2}+n} =\left(\frac{\sin\sqrt{\lambda}\ell}{\sqrt{\lambda}}\right)^{ \frac{n^2+n-4}{2}},
$$
which is independent of $r$. Thus, we cannot state that it is possible to distinguish the co-spectral graphs $\Graph_1,\Graph_2$. 
\end{remark}


\begin{remark}
Fuzzy balls are, by construction, simple graphs on at least six vertices. Another class of graphs that are co-spectral with respect to the normalized Laplacian, again introduced in~\cite{ButGro11}, are so-called \emph{inflated stars}. In their easiest form, inflated stars $\mIS_{m_1,\ldots,m_k}$ are, for all $k\in \N$, combinatorial graphs on $V=m+n+k+1$ vertices and $E=2(m_1+\ldots+m_k)$ edges consisting of a central vertex $\mv_0$, $k$ external vertices $\mvdi,\ldots,\mv_k$, and, for all $i\in\{1,\ldots,k\}$, $2m_i$ parallel edges linking $\mv_0$ with $\mv_i$ -- each such edge subdivided into two  to make the graph (for technical reasons) bipartite. The smallest pair of co-spectral inflated stars consists of $\mIS_{1,3}$ and $\mIS_{2,2}$, which we show in Figure~\ref{fig.infl-star}.


\begin{figure}[H]
\begin{tikzpicture}
\coordinate (a) at (0,0);
\coordinate (a1) at (1,.8);
\coordinate (a2) at (1,-.8);
\coordinate (a3) at (1,0);
\coordinate (b) at (2,0);
\coordinate (b2) at (3,0);
\coordinate (c) at (4,0);
\draw[bend left=63]  (a) edge (a1);
\draw[bend right=63]  (a) edge (a2);
\draw[bend left=63]  (a1) edge (b);
\draw[bend right=63]  (a2) edge (b);
\draw  (a) edge (c);
\node at (a) [anchor=east] {$\mv_1$};
\node at (b) [anchor=south west] {$\mv_2$};
\node at (c) [anchor=west] {$\mv_3$};
\draw[fill] (a) circle (2pt);
\draw[fill] (b) circle (2pt);
\draw[fill] (a1) circle (2pt);
\draw[fill] (a2) circle (2pt);
\draw[fill] (a3) circle (2pt);
\draw[fill] (b2) circle (2pt);
\draw[fill] (c) circle (2pt);
\end{tikzpicture}
\quad
\begin{tikzpicture}
\coordinate (a) at (0,0);
\coordinate (a1) at (1,.8);
\coordinate (a2) at (1,-.8);
\coordinate (b) at (2,0);
\coordinate (b1) at (3,.8);
\coordinate (b2) at (3,-.8);
\coordinate (c) at (4,0);
\draw[bend left=63]  (a) edge (a1);
\draw[bend right=63]  (a) edge (a2);
\draw[bend left=63]  (a1) edge (b);
\draw[bend right=63]  (a2) edge (b);
\draw[bend left=63]  (b) edge (b1);
\draw[bend right=63]  (b) edge (b2);
\draw[bend left=63]  (b1) edge (c);
\draw[bend right=63]  (b2) edge (c);
\node at (a) [anchor=east] {$\mv_1$};
\node at (b) [anchor=west] {$\mv_2$};
\node at (c) [anchor=west] {$\mv_3$};
\draw[fill] (a) circle (2pt);
\draw[fill] (a1) circle (2pt);
\draw[fill] (a2) circle (2pt);
\draw[fill] (b1) circle (2pt);
\draw[fill] (b2) circle (2pt);
\draw[fill] (b) circle (2pt);
\draw[fill] (c) circle (2pt);
\end{tikzpicture}
\caption{The inflated stars $\mIS_{1,3}$ (left) and $\mIS_{2,2}$ (right).}\label{fig.infl-star}
\end{figure}
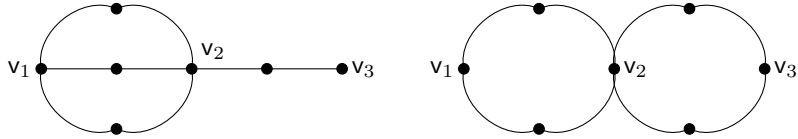



Inflated stars easily allow us to explain why we have imposed simplicity of the metric graph throughout Section~\ref{sec:inverse-tree}. Indeed, it is well-known that inserting or removing so-called \emph{dummy vertices} -- i.e., vertices of degree two -- has no impact on the spectrum of standard quantum graphs, see~\cite[Remark~2.1]{BerKenKur19}. Accordingly, the metric graphs whose underlying combinatorial graphs are $\mIS_{1,3},\mIS_{2,2}$ can be equivalently drawn as in Figure~\ref{fig.infl-star-metric}: these are co-spectral quantum graphs on 3 vertices. Though, they are no counter-example to the assertion, mentioned in the introduction, that the spectrum of the free Laplacian with standard vertex conditions on a \emph{simple}, connected graph on at most 6 vertices uniquely determines the shape of the graph.


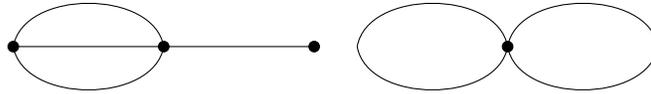
\begin{figure}[H]
\begin{tikzpicture}
\coordinate (a) at (0,0);
\coordinate (b) at (2,0);
\coordinate (c) at (4,0);
\draw[bend left=80]  (a) edge (b);
\draw[bend right=80]  (a) edge (b);
\draw  (a) edge (c);
\draw[fill] (a) circle (2pt);
\draw[fill] (b) circle (2pt);
\draw[fill] (c) circle (2pt);
\end{tikzpicture}
\quad
\begin{tikzpicture}
\coordinate (a) at (0,0);
\coordinate (b) at (2,0);
\coordinate (c) at (4,0);
\draw[bend left=80]  (a) edge (b);
\draw[bend right=80]  (a) edge (b);
\draw[bend left=80]  (b) edge (c);
\draw[bend right=80]  (b) edge (c);
\draw[fill] (b) circle (2pt);
\end{tikzpicture}
\caption{The non-simple metric graph corresponding to the inflated stars $\mIS_{1,3}$ (left) and $\mIS_{2,2}$  (right).}\label{fig.infl-star-metric}
\end{figure}


Incidentally, inflated stars also allow us to contribute to extremal spectral geometry of quantum graphs. In \cite[Theorem~4.2]{KenKurMal16} it was stated that the spectral gap for the free Laplacian with standard vertex conditions on metric graphs with $E$ edges is maximal for equilateral flowers and pumpkin graphs. It has been already pointed out in~\cite{BanLev17} that, for $E=2$, this assertion is false because \emph{all} flower graphs (i.e., figure-eight graphs) have same spectral gap.
 Now, inflated stars provide a class of (equilateral!) metric graphs on arbitrarily large number of edges such that the corresponding standard quantum graphs are co-spectral.
\end{remark}

\bibliographystyle{plain}
%

\end{document}